\newcommand{\R}{\mathbb R}
\newtheorem{theorem}{Theorem}[section]
\newtheorem{lemma}[theorem]{Lemma}
\newtheorem{remark}[theorem]{Remark}
\numberwithin{equation}{section}
\newcommand{\T}{{\mathbb T}}
\newcommand{\modk}[1]{\textcolor{black}{#1}\index {#1}}
\newcommand{\modkzz}[1]{\textcolor{black}{#1}\index {#1}}
\newcommand{\wsigma}{\widetilde{\sigma}}
\newcommand{\wpoo}{\widetilde{p}_{11}}
\newcommand{\wpot}{\widetilde{p}_{12}}
\newcommand{\wptt}{\widetilde{p}_{22}}
\begin{document}
\title{The connections between Lyapunov functions for some optimization algorithms and differential equations.}
\author{J. M. Sanz Serna $^{1}$ \and Konstantinos C. Zygalakis$^{2}$}

\maketitle

\begin{abstract}
\modk{In this manuscript we study the properties of a family
of a
second order differential equations with damping, its
discretizations
and their connections with accelerated optimization
algorithms for
$m$-strongly convex and $L$-smooth functions. In
particular, using
the Linear Matrix Inequality {\color{black}(LMI)} framework
developed by  \emph{Fazlyab et. al. $(2018)$},
we derive analytically a (discrete) Lyapunov function for a
two-parameter family of Nesterov optimization methods,
which allows
for \modkzz{the} complete characterization of their
convergence rate. {\color{black} In the appropriate limit,  this
family
of methods may be seen as a discretization of a family of
second
order ordinary differential equations for which we construct
(continuous) Lyapunov functions by means of the LMI
framework. The continuous Lyapunov functions may
alternatively be obtained by studying the limiting behaviour
of their discrete counterparts.} % end of blue
Finally, we show  that the majority of typical discretizations
of the {\color{black} of the family of ODEs}, such as the Heavy
ball method,  do not possess {\color{black} Lyapunov functions with properties similar to those of the Lyapunov function constructed here for the Nesterov method.}}

\end{abstract}

%\modk{In this manuscript we study the properties of a family of second order differential equation with damping from the perspective of Lyapunov functions using the Integral Quadratic Constraints framework developed in \cite{FRMP} }

\section{Introduction}
This paper studies Lyapunov functions for  differential equations with damping, their  discretizations, and  optimization algorithms.

The simplest algorithm for solving
\[
\min_{x \in \R^{d}} f(x)
\]
is the gradient descent (GD) method
\[
x_{k+1}=x_{k}-\alpha_k \nabla f(x_{k}),
\]
which is of course the result of applying Euler's rule, with step-size \(\alpha_k>0\), to the gradient system
\[
\frac{dx}{dt}=-\nabla f(x),  \qquad x(0)=x_{0}.
\]
The value of \(f\) decreases along solutions \(x(t)\) of  this system and, correspondingly, it may be hoped that, for GD,
\(f(x_{k+1})\leq f(x_k)\) for sufficiently small \(\alpha_k\). In fact, that is the case for \(\alpha_k<2/L\) if
 \(f\) is \(L\)-smooth, i.e.\ if \(\nabla f(x)\) is \(L\)-Lipschitz continuous. In this paper we are mainly interested in problems where \(f\) belongs the set
\( \mathcal{F}_{m,L}\)
of \(m\)-strongly convex and \(L\)-smooth functions, a class that plays an important role in optimization
\cite{N14}. For \(f\) in this class  and the constant step-size
$\alpha=2/(m+L)$, GD has a bound \cite[Theorem 2.1.15]{N14}
\begin{equation}\label{eq:GDbound}
f(x_k) - f(x^\star) \leq \frac{L}{2}\left(\frac{1-1/\kappa}{1+1/\kappa}\right)^{2k} \| x_0-x^\star\|^2,
\end{equation}
where \(x^\star\) is the (unique) minimizer of \(f\) and \(\kappa = L/m\geq 1\) is the condition number of \(f\).

The {\color{black} \(1-\mathcal{O}(1/\kappa)\) rate of} decay in \(f\) in the preceding bound is unsatisfactory because in many
 applications of interest one has $\kappa \gg 1$. It is possible to improve on GD by resorting to \emph{accelerated} algorithms {\color{black} with rates \(1-\mathcal{O}(1/\sqrt{\kappa})\);} for instance, for the method
\begin{subequations} \label{eq:nest}
\begin{align}
x_{k+1} &=y_{k}-\frac{1}{L} \nabla f(y_{k}), \\
y_{k} &=x_{k}+\frac{1-\sqrt{1/\kappa}}{1+\sqrt{1/\kappa}}(x_{k}-x_{k-1}),
\end{align}
\end{subequations}
introduced by Nesterov, it may be shown \cite[Theorem 2.2.3]{N14} that, if \(y_0= x_0\),
\begin{equation}\label{eq:boundfrombook}
f(x_{k}) -f(x^\star) \leq \left(1-\sqrt{1/\kappa}\right)^{k} \Big( f(x_0)-f(x^\star)+ \frac{m}{2} \|x_0-x^\star\|^2\Big).
\end{equation}
 The factor \(1-\sqrt{1/\kappa}\) here is close to the optimal possible
 factor \((1-\sqrt{1/\kappa})^2/(1+\sqrt{1/\kappa})^2\) one can
 achieve for minimization algorithms when $f \in \mathcal{F}_{m,L}$
 \cite[Theorem 2.1.13]{N14}. The algorithm \eqref{eq:nest} is also
related to ODEs, because it may be seen as a discretization of
 of the Polyak damped oscillator equation \cite{P64}
\begin{equation} \label{eq:Pol_ODE}
\ddot{x}+2\sqrt{m}\dot{x}+\nabla f(x)=0,
\end{equation}
whose solutions \(x(t)\) approach \(x^\star\) as \(t\rightarrow \infty\) if \(f\) is \(m\)-strongly convex \cite[Proposition 3]{WRJ16}.

In  recent years,  there has been a revived interest, beginning with
\cite{SBC16}, in the connections between differential equations and
optimization algorithms (\modk{see also \cite{SRB17}). In particular,
there has been {\color{black} several papers (see e.g.\ \cite{WWJ16,KBB15})} %end of colour blue
 that proposed accelerated algorithms, both in  Euclidean and non-
Euclidean geometry,  based on discretizations of second order
dissipative ODEs. The structure of these ODEs and the fact that
they can been viewed as describing Hamiltonian systems with
dissipation, led to a number of research works that tried to construct or explain optimization algorithms using concepts such as shadowing \cite{OL19}, {\color{black} symplecticity} % end of color blue
\cite{BJW18,BDF19,MJ19,MJ20,SDS19}, discrete gradients \cite{ERRS18},  and backward error analysis \cite{FJV20}.}

\modk{A common feature of the analysis presented  in many of the papers mentioned above was the construction of a discrete Lyapunov function that was used in order to deduce the convergence rate of the underlying algorithm.}  In \cite{WRJ16} a general analysis of optimization methods based on the derivation of Lyapunov functions that mimic ODE Lyapunov functions  was carried out; that paper presents a Lyapunov function for \eqref{eq:Pol_ODE}. A Lyapunov function for \eqref{eq:nest} may be \modk{seen in \cite{LO20}, where it was also used to study stochastic versions of the algorithm. {\color{black} The paper \cite{SDJ18}, among other contributions, constructs a Lyapunov function for a one-parameter family of optimization algorithms that includes \eqref{eq:nest} as a particular case.} %end of color blue
Outside the field of optimization, Lyapunov functions are important in establishing ergodicity of random dynamical systems \cite{SS99}, as well as ergodicity of Markov Chain Monte Carlo algorithms, see for example \cite{MT93,BRSS17}}. The construction of Lyapunov functions for optimization algorithms from the perspective of control theory was  the subject of study  in  \cite{FRMP}. The authors extend the work in  \cite{LRP16} and derive  Linear Matrix Inequalities (LMIs) that guarantee the existence of   suitable Lyapunov functions that may be used to establish the convergence rate of the algorithm under study. In addition, \cite{FRMP} develops
an LMI framework to construct Lyapunov functions for systems of ODEs. Typically, the LMIs that appear in this context have {\color{black} been solved numerically in the literature}.

In this work,
\begin{enumerate}
\item For \(f\in \mathcal{F}_{m,L}\), we use the LMI framework from \cite{FRMP} to derive \emph{analytically} Lyapunov functions for a two-parameter family of Nesterov optimization methods (see \eqref{eq:nest1} below); {\color{black}  this family includes the one-parameter family of algorithms in \cite{SDJ18}.} %color blue has ended
     In this way we find, as a function  of the {\color{black} two parameters in \eqref{eq:nest1},} %color read ended
       a convergence rate for the  methods in the family. It turns out that the best convergence rate is achieved when the parameters are chosen as in \eqref{eq:nest}. {\color{black} The relation between the Lyapunov function
       constructed in the present work and its counterpart in \cite{SDJ18} is discussed
       in Remark~\ref{rem:comparisonLyapunov}.} % end of color blue

\item By taking an appropriate limit of the parameters  {\color{black} as in e.g.\ \cite{Scieur:2016,BJW18,SDJ18,BDF19,MJ19,MJ20,SDS19,FJV20} } the optimization algorithms in the family may be seen as discretizations of second-order ODEs of the form
\begin{equation} \label{eq:Pol_ODE1}
\ddot{x}+\bar{b} \sqrt{m} \dot{x}+\nabla f(x)=0,
\end{equation}
where \(\bar b>0\) is a friction parameter. We obtain analytically Lyapunov functions for \eqref{eq:Pol_ODE1}
and determine, as a function of \(\bar b\), a convergence rate of \(f\) to \(f(x^\star)\) along  solutions \(x(t)\).
 We prove that the value $\bar{b}=2$ in the Polyak ODE \
 \eqref{eq:Pol_ODE} yields the \emph{optimal convergence
 rate if \( f\) is \(m\)-strongly convex}. \modkzz{Additionally we
 show that if one is to take explicitly into account the value of
 $L$ into this calculation, the optimal value of $\bar{b}$
becomes strictly larger than $2$ and yields slightly better convergence rates}.
\item  We show that, in the limit where the optimization algorithms approximate the ODEs, the discrete Lyapunov functions converge to the ODE Lyapunov function. \modkzz{Using this correspondence
 we show}, by means of the Heavy Ball method \modk{\cite{P64}} and other examples, that typically, optimization algorithms that are discretizations of \eqref{eq:Pol_ODE1} {\color{black} do not possess discrete Lyapunov functions that mimic the Lyapunov function of the differential equation in item 2 above and lead to acceleration.}
    This emphasizes the well-known fact that, when designing optimization methods, it is not sufficient to ensure that the algorithm may be seen as a consistent discretization of a well-behaved ODE. Unfortunately, discretizations do not necessarily inherit the good long-time properties of the differential equation, \modk{as seen for example in the case of discretization of gradient flows \cite{SSKZ20}, and Hamiltonian problems \cite{sanz2018numerical}.}
\end{enumerate}

The rest of the paper is organized as follows. In Section~\ref{sec:prelim} we briefly review the approach in \cite{FRMP} that provides a basis for our constructions. In Section~\ref{sec:nest_anal} we find analytically Lyapunov functions/rates of convergence for a two-parameter family of optimization methods that contains \eqref{eq:nest} as a particular case. Section~\ref{sec:ode} analyzes the ODE \eqref{eq:Pol_ODE1} and Section~\ref{sec:connect} studies the connection between the discrete and continuous Lyapunov functions. The Heavy Ball method and other methods that do not possess suitable Lyapunov functions are discussed in Section~\ref{sec:heavy}. \modkzz{Finally, we present in the appendix the calculations that allows us to deduce that  while the choice \(\bar b = 2\) in \eqref{eq:Pol_ODE1} is optimal if \(f\) is only assumed to be \(m\)-strongly convex, slightly better rates of convergence may be achieved for \(f\in \mathcal{F}_{m,L}\) by taking \(\bar b>2\)}.

\section{Preliminaries}\label{sec:prelim}
We will now briefly describe the framework introduced in \cite{FRMP} for the construction of Lyapunov functions of optimization methods and differential equations. The presentation here is adapted from the material in \cite{FRMP} to suit our specific needs.
{\color{black}
\begin{remark} The following material is limited to results needed to study strongly convex optimization. However the LMI approach  in \cite{FRMP} also works in convex optimization.
\end{remark}
}%end of color blue
\subsection{Optimization methods}
Optimization algorithms can often
be represented as linear dynamical systems interacting with one or
more static nonlinearities (see \cite{LRP16}). In this paper we will consider  first-order
algorithms that have the following state-space representation
\begin{subequations} \label{eq:control_disc}
\begin{align}
\xi_{k+1} &=A\xi_{k}+Bu_{k}, \\
u_{k} & = \nabla f(y_{k}), \\
y_{k} & =C\xi_{k}, \\
x_{k} &=E \xi_{k},
\end{align}
\end{subequations}
where $\xi_{k} \in \R^{n}$ is the state, $u_{k} \in \R^{d}$ is the input
$(d \leq n)$, $y_{k}  \in \R^{d}$ is the feedback output that is mapped  to
$u_{k}$ by the nonlinear map $\nabla f$. From the perspective of the optimization, $x_{k}$ is the
approximation to the mimimizer \(x^\star\).

As example, consider algorithms of the well-known form (\cite{LRP16,FRMP})
\begin{subequations} \label{eq:gen_eq}
\begin{align}
x_{k+1} &=x_{k}+\beta (x_{k}-x_{k-1})-\alpha \nabla f(y_{k}), \\
y_{k} &=x_{k}+\gamma(x_{k}-x_{k-1}),
\end{align}
\end{subequations}
where $\alpha>0,\beta,\gamma$ are scalar parameters that specify the algorithm within the family.
  For $\beta=\gamma=0$ we recover GD. For  $\beta=\gamma$, we have Nesterov's method; \eqref{eq:nest} corresponds to a particular choice of \(\alpha\) and \(\beta\). The Heavy Ball method has $\gamma=0$, \(\beta\neq 0\). By defining the state vector  $\modk{\xi_{k} =[x^{\T}_{k-1}, x^{\T}_{k}]^{T}} \in \R^{2d}$ we can represent \eqref{eq:gen_eq} in the  form \eqref{eq:control_disc} with the matrices $A,B,C, E$  given by
\[
A=\left[\begin{matrix}
0 & I_{d} \\
-\beta I_{d} & (\beta+1)I_{d}
\end{matrix}\right], \:
 B =\left[\begin{matrix} 0 \\-\alpha I_{d}\end{matrix}\right], \:
 C=\left[\begin{matrix} -\gamma I_{d} & (\gamma+1)I_{d} \end{matrix}\right]
,\:
E = \left[ \begin{matrix} 0 & I_d  \end{matrix}\right].
\]
Fixed points of \eqref{eq:control_disc} satisfy
\[
\xi^\star=A\xi^\star+Bu^\star, \quad y^\star=C\xi^\star, \quad u^\star=\nabla f(y^\star), \quad x^\star=E\xi^\star;
\]
in the optimization context \(u^\star  = 0\), and \(y^\star=x^\star\) is the minimizer sought.

To study the convergence rate of optimization algorithms, \cite{FRMP} considers  functions of the form
\begin{equation} \label{eq:liap_disc}
V_{k}(\xi)=\rho^{-2 k} \left(a_{0}(f(x)-f(x^\star))+(\xi-\xi^\star)^{\T}
P(\xi-\xi^\star) \right),
\end{equation}
where \(a_0 >0 \) and \(P\) is positive semi-definite (denoted by \(P \succeq 0\)).
If along the trajectories of \eqref{eq:control_disc}
\begin{equation} \label{eq:liap_decay}
V_{k+1}(\xi_{k+1}) \leq V_{k}(\xi_{k}),
\end{equation}
 we can conclude that $\rho^{-2k}a_{0}(f(x_{k})-f(x^\star)) \leq V_{k}(\xi_{k}) \leq V_{0}(\xi_{0})$ or
\[
 f(x_{k})- f(x^\star) \leq \rho^{2k} \frac{V_{0}(\xi_{0})}{a_{0}}.
\]
If  $\rho <1$, we have found a convergence rate for  $f(x_{k})$ towards the optimal value $f(x^\star)$. The following theorem defines an LMI  that, when $f \in \mathcal{F}_{m,L}$,  guarantees that the property \eqref{eq:liap_decay} holds and therefore \eqref{eq:liap_disc} provides a Lyapunov function for the system .

\begin{theorem} \label{theo:main_disc}
{\color{black}\emph{(Theorem 3.2 in \cite{FRMP}.)}
}%end of color blue
Suppose that, for \eqref{eq:control_disc}, there exist $a_{0} >0, P \succeq 0$, $\ell >0$, and $\rho \in [0,1)$
such that
\begin{equation} \label{eq:LMI_disc}
T= M^{(0)}
+a_{0}\rho^{2}M^{(1)}+a_{0}(1-\rho^{2})M^{(2)}+\ell M^{(3)} \preceq 0,
\end{equation}
where
\[
M^{(0)}=\left[ \begin{matrix}
A^{\T}PA-\rho^{2}P & A^\T PB \\
B^{\T}PA & B^{\T} P B
\end{matrix}\right],
\]
and
\[
M^{(1)}=N^{(1)}+N^{(2)}, \quad M^{(2)}=N^{(1)}+N^{(3)},\quad  M^{(3)}=N^{(4)},
\]
with
\begin{align*}
N^{(1)} &= \left[ \begin{matrix}
EA-C & EB \\
0 & I_{d}
\end{matrix}\right]^{\T}\left[ \begin{matrix}
\frac{L}{2}I_{d} & \frac{1}{2}I_{d} \\
\frac{1}{2}I_{d} & 0
\end{matrix}\right]\left[ \begin{matrix}
EA-C & EB \\
0 & I_{d}
\end{matrix}\right],
\\
N^{(2)} &= \left[ \begin{matrix}
C-E & 0 \\
0 & I_{d}
\end{matrix}\right]^{\T}\left[ \begin{matrix}
-\frac{m}{2}I_{d} & \frac{1}{2}I_{d} \\
\frac{1}{2}I_{d} & 0
\end{matrix}\right]\left[ \begin{matrix}
C-E & 0 \\
0 & I_{d}
\end{matrix}\right],
\\
N^{(3)} &=\left[ \begin{matrix}
C^{\T} & 0 \\
0 & I_{d}
\end{matrix}\right]\left[ \begin{matrix}
-\frac{m}{2}I_{d} & \frac{1}{2}I_{d} \\
\frac{1}{2}I_{d} & 0
\end{matrix}\right]\left[ \begin{matrix}
C & 0 \\
0 & I_{d}
\end{matrix}\right],
\\
N^{(4)} &=  \left[ \begin{matrix}
C^{\T} & 0 \\
0 & I_{d}
\end{matrix}\right]\left[ \begin{matrix}
-\frac{m L}{m+L}I_{d} & \frac{1}{2}I_{d} \\
\frac{1}{2}I_{d} &  -\frac{1}{m+L}I_{d}
\end{matrix}\right]\left[ \begin{matrix}
C & 0 \\
0 & I_{d}
\end{matrix}\right].
\end{align*}
Then, for $f \in \mathcal{F}_{m,L}$, the sequence $\{x_{k}\}$ satisfies
\[
f(x_{k})-f(x^\star) \leq \frac{a_{0}(f(x_{0})-f(x^\star))+(\xi_{0}-\xi^\star)^{\T}P(\xi_{0}-\xi^\star)}{a_0}\rho^{2 k}.
\]
\end{theorem}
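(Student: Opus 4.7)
The proof plan is to show that the LMI \eqref{eq:LMI_disc} forces the one-step contraction \(W(\xi_{k+1}) \leq \rho^2 W(\xi_k)\) along trajectories of \eqref{eq:control_disc}, where \(W(\xi) := a_0(f(x)-f(x^\star)) + (\xi-\xi^\star)^\top P(\xi-\xi^\star)\) and \(x = E\xi\). Iterating gives \(W(\xi_k) \leq \rho^{2k} W(\xi_0)\); since \(P \succeq 0\) and \(x^\star\) minimises \(f\), the scalar \(a_0(f(x_k)-f(x^\star))\) is controlled by \(W(\xi_k)\), and the claimed bound follows immediately. Thus everything reduces to proving the one-step contraction.

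To that end, introduce the augmented vector \(w_k := [(\xi_k-\xi^\star)^\top,\ (u_k-u^\star)^\top]^\top\), recalling that in the optimization setting \(u^\star = \nabla f(x^\star) = 0\), so \(u_k - u^\star = \nabla f(y_k)\). Subtracting the fixed-point relation from the recursion yields \(\xi_{k+1}-\xi^\star = A(\xi_k-\xi^\star) + B(u_k-u^\star)\), and a direct expansion gives
\[
w_k^\top M^{(0)} w_k \;=\; (\xi_{k+1}-\xi^\star)^\top P(\xi_{k+1}-\xi^\star) - \rho^2(\xi_k-\xi^\star)^\top P(\xi_k-\xi^\star),
\]
which is precisely the quadratic part of the target contraction. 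The crux is then to recognise each \(N^{(i)}\) as encoding one standard inequality from \(\mathcal{F}_{m,L}\). Using the identities \((EA-C)(\xi_k-\xi^\star)+EB(u_k-u^\star) = x_{k+1}-y_k\), \((C-E)(\xi_k-\xi^\star) = y_k-x_k\) and \(C(\xi_k-\xi^\star) = y_k-x^\star\), each form \(w_k^\top N^{(i)} w_k\) collapses to a simple expression in \(\nabla f(y_k)\) and these displacements. \(L\)-smoothness of \(f\) evaluated at \((y_k, x_{k+1})\) gives \(w_k^\top N^{(1)} w_k \geq f(x_{k+1}) - f(y_k)\); \(m\)-strong convexity at \((y_k, x_k)\) and at \((y_k, x^\star)\) yields \(w_k^\top N^{(2)} w_k \geq f(y_k) - f(x_k)\) and \(w_k^\top N^{(3)} w_k \geq f(y_k) - f(x^\star)\); and co-coercivity of \(\nabla f\) at \((y_k, x^\star)\), together with \(\nabla f(x^\star) = 0\), gives \(w_k^\top N^{(4)} w_k \geq 0\). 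Adding these produces \(w_k^\top M^{(1)} w_k \geq f(x_{k+1}) - f(x_k)\), \(w_k^\top M^{(2)} w_k \geq f(x_{k+1}) - f(x^\star)\) and \(w_k^\top M^{(3)} w_k \geq 0\).

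Combining all four contributions with the coefficients in \(T\), and invoking the elementary identity
\[
\rho^2[f(x_{k+1})-f(x_k)] + (1-\rho^2)[f(x_{k+1})-f(x^\star)] = [f(x_{k+1})-f(x^\star)] - \rho^2[f(x_k)-f(x^\star)],
\]
together with the sign constraints \(a_0, \ell > 0\), one obtains \(w_k^\top T w_k \geq W(\xi_{k+1}) - \rho^2 W(\xi_k)\). The hypothesis \(T \preceq 0\) then forces the right-hand side to be non-positive, which is exactly the one-step contraction. The only non-mechanical part of the argument is the identification of the four \(N^{(i)}\) with the correct \(\mathcal{F}_{m,L}\) inequalities at the correct pairs of points; once this matching is in place, the coefficients \(a_0\rho^2\), \(a_0(1-\rho^2)\) and \(\ell\) appearing in the LMI are precisely those that make the weighted sum telescope into the desired Lyapunov inequality.
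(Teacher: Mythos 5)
Your proof is correct. Note that the paper itself does not prove this theorem --- it is imported verbatim as Theorem 3.2 of \cite{FRMP} --- so there is no internal proof to compare against; your argument (the quadratic identity for \(M^{(0)}\) giving the difference of the \(P\)-terms, the identification of \(N^{(1)},\dots,N^{(4)}\) with the \(L\)-smoothness, strong-convexity and co-coercivity inequalities at the pairs \((y_k,x_{k+1})\), \((y_k,x_k)\), \((y_k,x^\star)\), and the telescoping of the weights \(a_0\rho^2\), \(a_0(1-\rho^2)\), \(\ell\)) is exactly the standard derivation given in that reference.
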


\subsection{Continuous-time systems}
We  also consider  continuous-time dynamical systems in state space form
(throughout the paper we  often use a bar over symbols related to ODEs)
\begin{equation} \label{eq:con_system}
\dot{\xi}(t)=\bar{A}\xi(t)+\bar{B}u(t), \quad y(t)=\bar{C}\xi(t), \quad u(t)=\nabla f(y(t)) \quad \text{for all} \ t\geq0
\end{equation}
where  $\xi(t) \in \R^{n}$ is the state, $y(t) \in \R^{d} (d \leq n)$ the output, and $u(t)=\nabla f(y(t))$  the continuous feedback input.  Fixed points of \eqref{eq:con_system} satisfy
\[
0=\bar{A}\xi^\star, \quad y^\star=\bar{C}\xi^\star, \quad u^\star=\nabla f(y^\star);
\]
in our context \(u^\star = 0\) and \(y^\star =x^\star\).
We can replicate the convergence analysis of the discrete case using now  functions of the form
\begin{equation} \label{eq:liap_con}
\bar V(\xi(t)) =e^{\lambda t} \left(f(y(t))-f(y^\star)+(\xi(t)-\xi^\star)^{\T}\bar{P}(\xi(t)-\xi^\star) \right),
\end{equation}
where $\lambda >0$. If $\bar{P} \succeq 0$ and, along solutions, $(d/dt)\bar{V}(\xi(t)) \leq 0$, then we have $\bar V(\xi(t)) \leq \bar V(\xi(0))$ which in turns implies
\[
 f(y(t))-f(y^\star) \leq e^{-\lambda t} \bar V(\xi(0)).
\]

The following theorem similarly to the discrete time case, formulates an LMI that guarantees the existence of such a Lyapunov function.
\begin{theorem} \label{theo:main_con} Suppose that, for \eqref{eq:con_system}, there exist $\lambda >0$, $\bar{P} \succeq 0$, and $\sigma \geq 0$ that satisfy
\begin{equation} \label{eq:LMI_con}
\bar{T}= \bar M^{(0)}+\bar M^{(1)}+\lambda \bar M^{(2)}+\sigma \bar M^{(3)} \preceq 0
\end{equation}
where
\begin{align*}
\bar M^{(0)} &=
\left[ \begin{matrix}
\bar{P}\bar{A}+\bar{A}^{\T}\bar{P}+\lambda \bar{P} & \bar{P}\bar{B} \\
\bar{B} ^{\T}\bar{P}  & 0
\end{matrix}\right], \\
\bar M^{(1)} &= \frac{1}{2}\left[ \begin{matrix}
0 & (\bar{C} \bar{A} )^{\T} \\
\bar{C} \bar{A} & \bar{C} \bar{B} +\bar{B} ^{\T}\bar{C} ^{\T}
\end{matrix}\right], \\
\bar M^{(2)}&=\left[ \begin{matrix}
\bar{C}^{\T} & 0 \\
0 & I_{d}
\end{matrix}\right]\left[ \begin{matrix}
-\frac{m}{2}I_{d} & \frac{1}{2}I_{d} \\
\frac{1}{2}I_{d} & 0
\end{matrix}\right]\left[ \begin{matrix}
\bar{C} & 0 \\
0 & I_{d}
\end{matrix}\right], \\
\bar M^{(3)} &=\left[ \begin{matrix}
\bar{C}^{\T} & 0 \\
0 & I_{d}
\end{matrix}\right]\left[ \begin{matrix}
-\frac{m L}{m+L}I_{d} & \frac{1}{2}I_{d}, \\
\frac{1}{2}I_{d} & -\frac{1}{m+L}I_{d}
\end{matrix}\right]\left[ \begin{matrix}
\bar{C} & 0 \\
0 & I_{d}
\end{matrix}\right].
\end{align*}
Then the following inequality holds for $f \in \mathcal{F}_{m,L}$,  $t \geq 0$,
\[
f(y(t))-f(y^\star) \leq e^{-\lambda t} \left(f(y(0))-f(y^\star)+(\xi(0)-\xi^\star)^{\T}\bar{P}(\xi(0)-\xi^\star) \right).
\]
\end{theorem}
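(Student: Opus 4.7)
The plan is to show $\frac{d}{dt}\bar V(\xi(t)) \leq 0$ along trajectories; the claimed bound then follows by integration, since $\bar P \succeq 0$ makes $(\xi(0)-\xi^\star)^\T \bar P(\xi(0)-\xi^\star) \geq 0$ and $f(y(t))-f(y^\star) \geq 0$. The key idea is to assemble $\frac{d\bar V}{dt}$ as a quadratic form in the vector $z(t) = \bigl[(\xi(t)-\xi^\star)^\T,\, u(t)^\T\bigr]^\T$ (using $u^\star=0$), and then invoke the LMI \eqref{eq:LMI_con} together with convexity inequalities.

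First I would differentiate $\bar V$ directly. Using $\dot\xi = \bar A(\xi-\xi^\star) + \bar B u$ (since $\bar A\xi^\star=0$), $\dot y = \bar C\bar A(\xi-\xi^\star) + \bar C \bar B u$, and $u=\nabla f(y)$, one obtains
\begin{align*}
e^{-\lambda t}\frac{d\bar V}{dt}
&= \lambda\bigl(f(y)-f(y^\star)\bigr) + \lambda(\xi-\xi^\star)^\T \bar P(\xi-\xi^\star) \\
&\quad + (\xi-\xi^\star)^\T(\bar P \bar A + \bar A^\T\bar P)(\xi-\xi^\star) + 2(\xi-\xi^\star)^\T\bar P\bar B u \\
&\quad + u^\T \bar C \bar A(\xi-\xi^\star) + u^\T \bar C \bar B u.
\end{align*}
A direct block-matrix inspection shows that the terms on the second and third lines coincide exactly with $z^\T(\bar M^{(0)}+\bar M^{(1)})z$, so that
\[
e^{-\lambda t}\frac{d\bar V}{dt} = z^\T\bigl(\bar M^{(0)}+\bar M^{(1)}\bigr)z + \lambda\bigl(f(y)-f(y^\star)\bigr).
\]

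Next I would eliminate the scalar $\lambda(f(y)-f(y^\star))$ in favour of a quadratic form by invoking the two standard inequalities available for $f\in\mathcal F_{m,L}$. Strong convexity applied at $y$ and $y^\star$, combined with $\nabla f(y^\star)=0$, yields
\[
f(y)-f(y^\star) \leq u^\T(y-y^\star) - \tfrac{m}{2}\|y-y^\star\|^2 = z^\T \bar M^{(2)} z,
\]
which (since $\lambda>0$) lets me bound the $\lambda$-term by $\lambda z^\T\bar M^{(2)}z$. Co-coercivity of $\nabla f$ for $f\in\mathcal F_{m,L}$ gives
\[
u^\T(y-y^\star) - \tfrac{mL}{m+L}\|y-y^\star\|^2 - \tfrac{1}{m+L}\|u\|^2 = z^\T \bar M^{(3)} z \geq 0,
\]
so adding $\sigma\,z^\T\bar M^{(3)}z \geq 0$ (with $\sigma\geq 0$) can only enlarge the right-hand side. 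Combining,
\[
e^{-\lambda t}\frac{d\bar V}{dt} \leq z^\T\bigl(\bar M^{(0)}+\bar M^{(1)}+\lambda \bar M^{(2)}+\sigma \bar M^{(3)}\bigr)z = z^\T \bar T z \leq 0
\]
by hypothesis \eqref{eq:LMI_con}. Integration from $0$ to $t$ and division by $e^{\lambda t}$ yield the stated inequality.

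The main obstacle, and the only place real care is needed, is the bookkeeping that identifies the derivative with $z^\T(\bar M^{(0)}+\bar M^{(1)})z$: one has to verify the cross terms with the right symmetrization, in particular that the $\frac12$-factors inside $\bar M^{(1)}$ are exactly what is needed to reproduce $u^\T\bar C\bar A(\xi-\xi^\star)$ and $u^\T \bar C\bar B u$ after symmetrizing the quadratic form. The subsequent application of strong convexity and co-coercivity is standard, as is the final integration step.
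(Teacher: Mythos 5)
Your proof is correct, and it follows exactly the route the paper itself sketches just before stating the theorem (the result is quoted from Fazlyab et al.\ without proof): write $e^{-\lambda t}\,d\bar V/dt$ as $z^{\T}(\bar M^{(0)}+\bar M^{(1)})z+\lambda(f(y)-f(y^\star))$, dominate the scalar term via strong convexity by $\lambda z^{\T}\bar M^{(2)}z$, add the nonnegative co-coercivity term $\sigma z^{\T}\bar M^{(3)}z$, and invoke the LMI. The block-matrix bookkeeping you flag as the delicate point does check out, including the factor of $\tfrac12$ in $\bar M^{(1)}$ and the use of $\bar A\xi^\star=0$, $\nabla f(y^\star)=0$.
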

\section{A Lyapunov function for Nesterov's optimization algorithm} \label{sec:nest_anal}
We  study the  optimization  method (cf.\ \eqref{eq:gen_eq})
\begin{subequations}\label{eq:nest1}
\begin{align}
x_{k+1} &=x_k+\beta(x_k-x_{k-1})-\alpha\nabla f(y_k),\\
y_k  &= x_k+\beta  (x_k-x_{k-1}),
\end{align}
\end{subequations}
\(k=0,1,\dots\), with parameters \(\alpha>0\) and \(\beta\). As noted before, the
choice \(\beta=0\) gives GD and \(\beta\neq 0\)
corresponds to Nesterov's accelerated algorithm.

\subsection{The construction}

After introducing
\[
\delta = \sqrt{m\alpha},
\]
and the divided difference, \(k=0, 1,\dots\),
\begin{equation}\label{eq:dd}
d_k = \frac{1}{\delta}(x_k-x_{k-1}),
\end{equation}
%\end{equation}
the recursion \eqref{eq:nest1} may be rewritten (\(k=0,1,\dots\))
\begin{subequations}\label{eq:nest3}
\begin{align}
d_{k+1} &=\beta d_k -\frac{\alpha}{\delta} \nabla f(y_k),\\
x_{k+1}  &= x_k+\delta\beta d_k -\alpha \nabla f(y_k) ,\\
y_k &= x_k+\delta\beta d_k.
\end{align}
\end{subequations}
\begin{remark}\label{rem:dimensional} For future reference, it is useful to observe that, from a dimensional analysis point of view,
\(m\), \(L\) and \(1/\alpha\) have the dimensions of the quotient \(f/\|x\|^2\). Therefore \(\delta\) is a \emph{non-dimensional} version of  \(\sqrt{\alpha}\). The parameter \(\beta\) is non-dimensional. The divided difference \eqref{eq:dd} shares  the dimensions of \(x\).
\end{remark}

Equation \eqref{eq:nest3} can now be written in the form \eqref{eq:control_disc} with $\xi_k= [d_k^{\T},x_k^{\T}]^{\T} \in\R^{2d}$ and
\begin{equation}\label{eq:A}
A = \left[\begin{matrix}\beta I_d & 0\\ \delta\beta I_d& I_d\end{matrix}\right],\quad
B = \left[\begin{matrix}-(\alpha/\delta) I_d\\ -\alpha I_d\end{matrix}\right],\quad
C = \left[\begin{matrix} \delta\beta I_d & I_d\end{matrix}\right],\quad
E = \left[\begin{matrix} 0 & I_d\end{matrix}\right].
\end{equation}

In the preceding section, as in \cite{FRMP}, the state \(\xi_k\) was taken to be \([x_{k-1}^{\T},x_k^{\T}]^{\T}\) rather than \([d_k^{\T},x_k^{\T}]^{\T}\). While both choices are of course mathematically equivalent, the new \(\xi_k\) is more convenient for our purposes. In addition, when looking numerically for Lyapunov functions by solving LMIs, it leads to problems that are better conditioned for large condition numbers \(\kappa\).

\begin{remark}\label{rem:gradient}For \(\beta=0\) (gradient descent), the first equation in \eqref{eq:nest3} is a reformulation of the second: it would be more natural to use the simpler state \(\xi_k=x_k\).
\end{remark}

According to Theorem~\ref{theo:main_disc}, in order  to find a
Lyapunov function of the form \eqref{eq:liap_disc}, it is sufficient to find a matrix $P \succeq 0$ and numbers  $a_{0}>0$,  $0<\rho<1$,
$\ell \geq 0$, such that the matrix $T$ in \eqref{eq:LMI_disc} is negative semi-definite. At the outset, we choose
\(\ell = 0\) in order to simplify the subsequent analysis. As we will discuss in the Appendix, this simplification does not have  a significant impact on the value of the convergence rate \(\rho\) that results from the analysis. With \(\ell = 0\),
\eqref{eq:LMI_disc} is homogeneous in \(P\) and \(a_0\) and we may divide accross by \(a_0\). In other words, without loss of generality, we may take \(a_0=1\). Then \(T\) is a function of \(P\) and \(\rho\) (and the method parameters \(\beta\) and \(\delta\)).

The matrix \(A\) in \eqref{eq:A} is a Kronecker product of a \(2\times 2\) matrix and \(I_d\),
\[
A = \left[\begin{matrix}\beta  & 0\\ \delta\beta & 1\end{matrix}\right]\otimes I_d;
\]
the factor \(I_d\) originates from the dimensionality of the decision variable \(x\) and the \(2\times 2\) factor  is independent of \(d\) and arises from the optimization algorithm. The matrices \(B\), \(C\) and \(E\) have a similar Kronecker product structure.  It is then natural to consider symmetric matrices \(P\) of the form
\begin{equation}\label{eq:P}
P = \widehat P \otimes I_d,\qquad
\widehat P = \left[\begin{matrix}p_{11}  & p_{12}\\ p_{12} & p_{22}\end{matrix}\right],
\end{equation}
and then \(T\) will also have a Kronecker product structure
\begin{equation}\label{eq:T}
T = \widehat T\otimes I_d,\qquad
\widehat T = \left[\begin{matrix}t_{11}  & t_{12}&t_{13}\\ t_{12} & t_{22}&t_{23}\\t_{13} & t_{23}&t_{33}\end{matrix}\right],
\end{equation}
where the \(t_{ij}\) are explicitly given by the following complicated expressions obtained from \eqref{eq:A} and the recipes for \(M^{(0)}\), \(M^{(1)}\) and \(M^{(2)}\) in Theorem~\ref{theo:main_disc}:
\begin{subequations}
\label{eq:nuevoT}
\begin{align}
  t_{11} &= \beta^2p_{11}+2\delta \beta^2p_{12}+\delta^2 \beta^2p_{22}-\rho^2 p_{11}-\delta^2\beta^2m/2, \\
  t_{12} &= \beta p_{12}+\delta \beta p_{22}-\rho^2 p_{12} -\delta\beta m/2+\rho^2 \delta \beta m/2,\\
  t_{13} &=-\delta^{-1}\alpha\beta  p_{11}-2\alpha\beta p_{12}-\delta \alpha \beta p_{22} +\delta\beta/2, \\
  t_{22} &= p_{22}-\rho^2 p_{22}-m/2+\rho^2 m/2,  \\
  t_{23} &= -\delta^{-1}\alpha p_{12}-\alpha p_{22}+1/2-\rho^2/2, \\
  t_{33}&=  \delta^{-2}\alpha^2 p_{11}+2\delta^{-1}\alpha^2p_{12}+\alpha^2p_{22}+\alpha^2L/2-\alpha.
\end{align}
\end{subequations}

Our task is to  find \(\rho\in[ 0,1)\), \(p_{11}\), \(p_{12}\), and \(p_{22}\) that lead to \(\widehat T \preceq 0\)
and \(\widehat P \succeq 0\) (which imply \(T \preceq 0\)
and \(P \succeq 0\) ).
The algebra becomes simpler if we represent  \(\beta\) and \(\rho^2\) as:
\begin{equation}\label{eq:betarhosq}
\beta = 1-b\delta,\qquad
\rho^2 = 1-r\delta.
\end{equation}
Note that we are interested in \(r\in (0,1/\delta]\) so as to get \(\rho^2\in [0,1)\).
We proceed in steps as follows.

{\em First step.} Impose the condition \(t_{23}=0\). This leads to
\begin{equation}\label{eq:p12}
p_{12} = \frac{m}{2} r-\delta p_{22}.
\end{equation}

{\em Second step.} Impose the condition \(t_{13}=0\).  This results in
\[
p_{11} = \frac{m}{2}-2\delta p_{12} -\delta^2 p_{22},
\]
which in tandem with \eqref{eq:p12} yields
\begin{equation}\label{eq:p11}
p_{11} = \frac{m}{2}-mr\delta+\delta^2 p_{22}.
\end{equation}

{\em Third step.} Impose the condition \({\rm det}(\widehat P)=p_{11}p_{22}-p_{12}^2=0\). Using \eqref{eq:p12} and \eqref{eq:p11}, we have a linear equation for \(p_{22}\) with solution
\[
p_{22} = \frac{m}{2}r^2.
\]
We now take this value to \eqref{eq:p12} and \eqref{eq:p11} and get
\begin{equation}\label{eq:Pbis}
\widehat P = \left[\begin{matrix}p_{11} &p_{12}\\p_{12} &p_{22} \end{matrix} \right]
=\frac{m}{2} \left[\begin{matrix}(1-r\delta)^2 &r(1-r\delta)\\r(1-r\delta) &r^2 \end{matrix}\right],
\end{equation}
a matrix that is positive semi-definite (but not positive definite).

{\em Fourth step.} Impose \(t_{33}\leq 0\). After using  \eqref{eq:Pbis} in the expression for \(t_{33}\) in \eqref{eq:nuevoT}, this condition  is  seen to be equivalent to \(\alpha^2L-\alpha\leq 0\) or
\[\alpha \leq\frac{1}{L}
\]
(for \(\alpha=1/L\), \(t_{33}\) actually vanishes). In what follows we assume that this bound on \(\alpha\) holds; note that then \(\delta = \sqrt{m\alpha}\leq \sqrt{m/L}< 1\).

{\em Fifth step.} We impose \(t_{22}\leq 0\). This may be written as \((p_{22}-m/2)r\delta\leq 0\), which  leads to \(p_{22}\leq m/2\). From \eqref{eq:Pbis}
\[r\leq 1,
\]
which sets a lower limit \(\rho^2\leq 1-\delta\) for the rate of convergence. For \(r^2<1\), \(t_{22}<0\).

{\em Sixth step.} Impose \(t_{11}t_{22}-t_{12}^2=0\). From \eqref{eq:Pbis} and \eqref{eq:nuevoT}, some algebra yields
\[
t_{11}t_{22}-t_{12}^2 = -\frac{m^3}{4} r (1-r\delta)\: \Xi
\]
with
\begin{equation}\label{eq:Xi}
\Xi = \Xi_\delta(r,b)= (r+\delta)(1-\delta^2) b^2 -2 (1+r^2)(1-\delta^2) b +(r^3-3r^2\delta+3r-\delta).
\end{equation}
Since \(\delta <1\) and, after step five, \(r\in(0,1]\),
 we must have \(\Xi=0\). For fixed \(\delta \in(0,1)\), the condition \(\Xi_\delta=0\) establishes a relation between the values of \(r\) and \(b\) or, in other words, the rate of convergence \(\rho^2\) and the parameter \(\beta\) in \eqref{eq:nest1}. In order to study this relation, we now make a digression and describe, for fixed \(\delta\in(0,1)\),  the algebraic curve of equation \(\Xi_\delta(r,b)=0\) in the real plane \((r,b)\); in this description we allow arbitrary real values of \(r\) and \(b\) (even though in our problem \(r\in(0,1]\)).

The formula for the roots of a quadratic equation yields
\begin{equation}\label{eq:roots}b_{\pm} = \frac{(1+r^2)(1-\delta^2) \pm (1-r\delta) \sqrt{(1-r^2) (1-\delta^2)}}
{(r+\delta)(1-\delta^2)}.
\end{equation}
For  \(r^2\neq 1\) and \(r\neq -\delta\) there are two distinct real roots \(b_+\) and \(b_-\). For \(r=\pm 1\) there is a double root \(b= 2/(r+\delta)\). As \(r\downarrow -\delta\), we have \(b_+ \uparrow +\infty\) and \(b_- \downarrow -2\delta/(1-\delta^2)\). By using \eqref{eq:roots} it is not difficult to prove that \(\Xi_\delta(r,b)=0\) defines \(r\) as a single-valued function of the variable \(b\in\R\). (We could find an explicit expression for \(r\) in terms of \(b\) by means of the formula for the roots of a cubic equation, but this is not necessary for our purposes.) Figure~\ref{figure} provides a plot of the curve \(\Xi_\delta(r,b)=0\) when \(\delta =1/2\).

We now return to the construction of \(T\). Recall that for our purposes, we need \(r>0\) (so as to have \(\rho<1\)); this requirement holds for \(b\in(b_{\rm min}, b_{\rm max})\), where
\[
b_{\rm min} =\frac{1-\delta^2-\sqrt{1-\delta^2}}{\delta(1-\delta^2)}<0,\qquad
b_{\rm max} =\frac{1-\delta^2+\sqrt{1-\delta^2}}{\delta(1-\delta^2)}>0,
\]
are the intersections of the curve \(\Xi_\delta = 0\) with the vertical axis.
As \(\delta \downarrow 0\),
 \begin{equation}\label{eq:limits}
 b_{\rm min}\uparrow 0,\qquad b_{\rm max}\uparrow +\infty.
 \end{equation} The limits on \(b\) just found are equivalent to
\begin{equation}
-\sqrt{1-\delta^2} < \beta < +\sqrt{1-\delta^2}.
\end{equation}\label{eq:minmax}
For the maximum value \(r=1\) found in step five above, the formula
\eqref{eq:roots} gives the double root
 \(b= 2/(1+\delta)\) or \(\beta = (1-\delta)/(1+\delta)\). Values \(r\in(0,1)\) correspond to two different choices of
 \(b\in(b_{\rm min}, b_{\rm max})\).

\begin{figure}[t]
%\vskip -4cm
\begin{center}
\includegraphics[width=0.9\hsize]{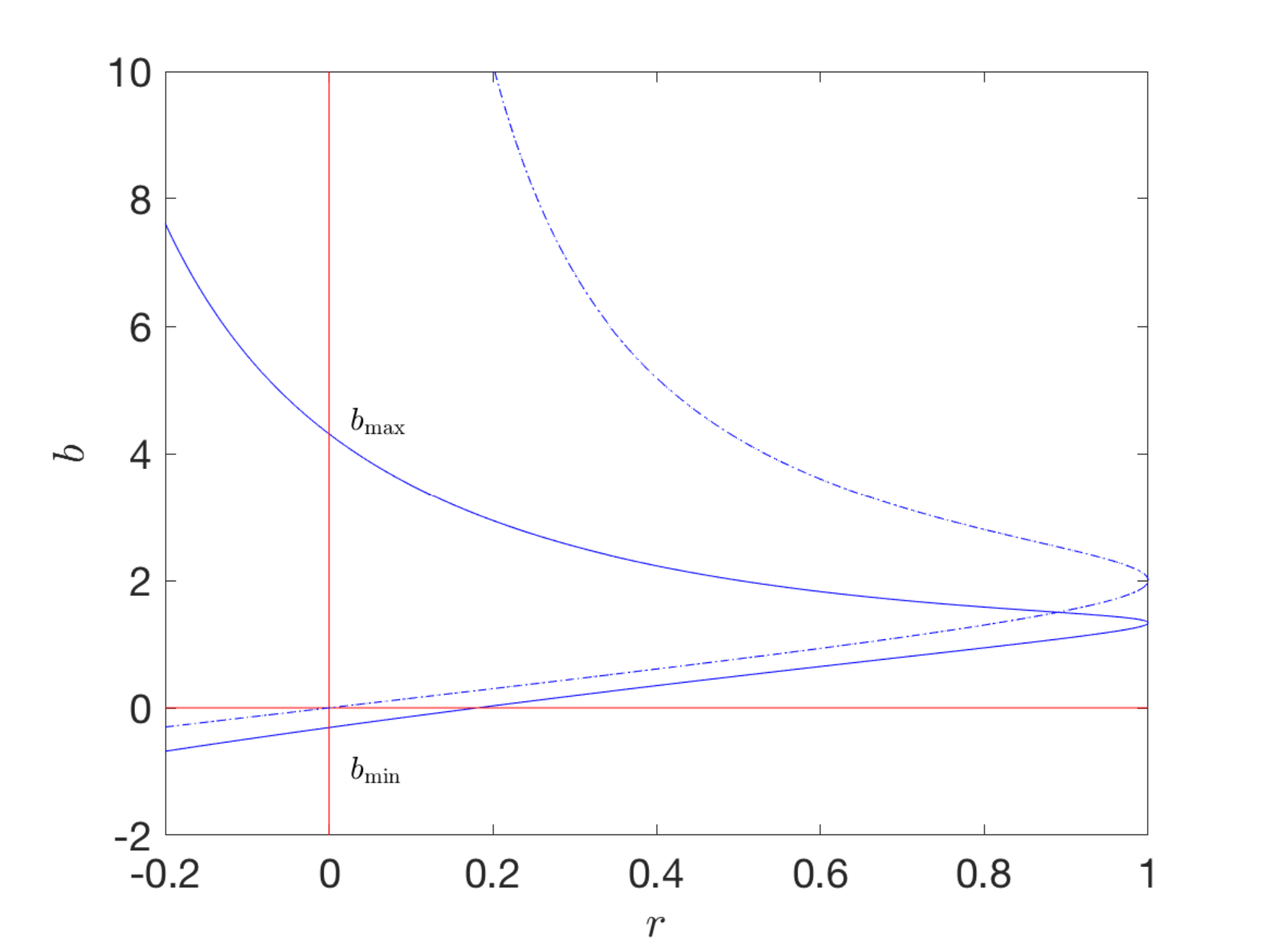}
\end{center}
\label{figure}
%\vskip -5cm
\caption{The solid curve corresponds to the equation \(\Xi_\delta(r,b)=0\)  when \(\delta =1/2\). It has a vertical asymptote at \(r=-\delta\) (not shown). To each real \(b\) there corresponds a single value of \(r\). For \(b\in(b_{\rm min},b_{\rm max})\), we have \(0<r \leq 1\), that corresponds to
\(1 >\rho^2 \geq 1-\delta\). The best rate \(\rho^2=1-\delta\) is achieved for \(b= 2\delta/(1+\delta)\), i.e.\ \(\beta = (1-\delta)/(1+\delta)\). The discontinuous curve corresponds to the equation \(\Xi_\delta(r,b)=0\) in the limit \(\delta \rightarrow 0\); again to each real \(b\) there corresponds a single value of \(r\). This curve is symmetric with respect to the origin (changing \(b\) into \(-b\) changes \(r\) into \(-r\)) and  has a vertical asymptote at \(r=0\). Positive values of \(b\) correspond to positive values of \(r\). The maximum value \(r=1\) is achieved when \(b=2\).}
\end{figure}

We are now ready to present the following result.

\begin{theorem}
\label{th:1}Consider the minimization algorithm \eqref{eq:nest1} (or \eqref{eq:nest3}) with parameters subject to
\[
\alpha \leq 1/L, \qquad -\sqrt{1-{m\alpha}} \leq \beta \leq \sqrt{1-{m\alpha}}.
\]
Set \(\delta = \sqrt{m\alpha}\) and  let \(r>0\) be the value determined
by \(\Xi_\delta(r,b) = 0\) (see \eqref{eq:Xi}), set \(\rho^2 = 1-r\delta<1\) and define the positive semi-definite matrix \(P\) by
\eqref{eq:P} and \eqref{eq:Pbis}. Then the matrix \(T\) in \eqref{eq:T}--\eqref{eq:nuevoT} is negative semi-definite.

As a result, for any \(x_{-1}\), \(x_0\),
the sequence
\begin{equation}\label{eq:theo}
\rho^{-2k}\Big( f(x_k)-f(x_\star)+ [d_k^{\T},x_k^{\T}-x_\star^{\T}]\,P\,[d_k^{\T},x_k^{\T}-x_\star^{\T}]^{\T}\Big)
\end{equation}
decreases monotonically, which, in particular, implies \[f(x_k)-f(x_\star)\leq C\rho^{2k}\]
with
\[
C = f(x_0)-f(x^\star) + \frac{m}{2}\left\|\frac{1-r\delta}{\delta} (x_0-x_{-1})+r(x_0-x^\star) \right\|^2.
\]  \end{theorem}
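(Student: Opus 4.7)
The plan is to collect the outcome of the six-step construction already carried out above into a single negative semi-definiteness certificate for $\widehat T$, and then extract the bound on $f(x_k)-f(x^\star)$ by appealing to Theorem~\ref{theo:main_disc} together with an explicit evaluation of the initial Lyapunov value.

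First I would observe that steps 1 and 2 annihilate $t_{13}$ and $t_{23}$, so $\widehat T$ in \eqref{eq:T} is block diagonal with a scalar block $t_{33}$ and a $2\times 2$ block built from $t_{11}$, $t_{12}$, $t_{22}$. Step 4 delivers $t_{33}\leq 0$ using the hypothesis $\alpha\leq 1/L$; step 5 delivers $t_{22}\leq 0$ using $r\leq 1$, which is exactly the condition $\beta\in[-\sqrt{1-\delta^2},\sqrt{1-\delta^2}]$ after going through the curve analysis and the bounds $b_{\rm min}, b_{\rm max}$; and step 6 forces $t_{11}t_{22}-t_{12}^2=0$. The remaining task is to promote singularity of the $2\times 2$ block to actual negative semi-definiteness. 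When $t_{22}<0$, the relation $t_{11}t_{22}=t_{12}^2\geq 0$ automatically gives $t_{11}\leq 0$, so the block has eigenvalues $0$ and $t_{11}+t_{22}\leq 0$; in the boundary case $r=1$, singularity collapses to $t_{12}=0$, and a direct substitution of \eqref{eq:Pbis} together with $\beta=(1-\delta)/(1+\delta)$ into the formula for $t_{11}$ in \eqref{eq:nuevoT} shows $t_{11}<0$ explicitly. This closes the proof that $\widehat T\preceq 0$, while $P=\widehat P\otimes I_d\succeq 0$ comes directly from the rank-one form of $\widehat P$ in \eqref{eq:Pbis}.

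With the LMI verified, the choice $\ell=0$, $a_0=1$ satisfies the hypotheses of Theorem~\ref{theo:main_disc}, which yields the monotone decay of \eqref{eq:theo}. To obtain the stated constant $C$, I would exploit that $\widehat P=(m/2)\,v v^{\T}$ with $v=(1-r\delta,r)^{\T}$: taking $\xi_0-\xi^\star=[d_0^{\T},(x_0-x^\star)^{\T}]^{\T}$, the quadratic form $(\xi_0-\xi^\star)^{\T}P(\xi_0-\xi^\star)$ collapses to the perfect square $(m/2)\|(1-r\delta)d_0+r(x_0-x^\star)\|^2$, and replacing $d_0$ by $(x_0-x_{-1})/\delta$ produces the formula for $C$ in the statement. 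The only non-routine step in the whole argument is promoting the determinantal identity to a full NSD conclusion at the boundary $r=1$; everything else is organised algebraic bookkeeping on top of the steps already carried out in the construction.
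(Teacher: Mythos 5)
Your argument follows the paper's proof essentially verbatim: reduce to the leading $2\times 2$ block via $t_{13}=t_{23}=0$ and $t_{33}\le 0$, use the vanishing determinant together with $t_{22}<0$ when $r<1$, and handle the boundary case $r=1$ by the explicit sign of $t_{11}$ (the paper likewise computes $t_{11}=-(m/2)\,\delta(1-\delta)^3/(1+\delta)<0$ there), with your evaluation of $C$ via the rank-one form of $\widehat P$ being exactly how the stated constant arises. The only点 worth noting is a small imprecision: $r\le 1$ is a consequence of the curve $\Xi_\delta(r,b)=0$ itself (its maximum over all real $b$ is $r=1$), while the bounds on $\beta$ are what guarantee $r>0$; this does not affect the correctness of the argument.
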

\begin{proof} Using Theorem \ref{theo:main_disc}, we only have to prove that \(\widehat T\preceq 0\). The  second, first and fourth steps of our construction respectively ensure that \(t_{13}=t_{23}=0\) and \(t_{33}\leq 0\) and therefore we are left with the task of checking that the \(2\times 2\) matrix
\(\widehat T^{12}\) obtained by suppressing the last row and last column of \(\widehat T\) is \(\preceq 0\). If \(r<1\), we know from step five that \(t_{22} <0\) and from step six that the determinant of \(\widehat T^{12}\) vanishes and therefore \(\widehat T^{12}\preceq 0\). For \(r=1\), \(t_{22} =0\), but again \(\widehat T^{12}\preceq 0\), because in this case \(t_{11} = - (m/2) \delta (1-\delta)^3/(1+\delta)<0\).
\end{proof}

For fixed \(\alpha\leq 1/L\), as noted above,  \(\rho^2\) is minimized by the choice \[\beta= (1-\sqrt{m\alpha})/(1+\sqrt{m\alpha});\] then
\[
\rho^2 = {1-\sqrt{m\alpha}}.
\]
 When \(\alpha\) is allowed to vary in the interval \((0,1/L]\), increasing  \(\alpha\) results in an improvement of \(\rho^2\), so that the best rate \(\rho^2 = 1-\sqrt{m/L}=1-\sqrt{1/\kappa}\) is obtained by setting \(\alpha=1/L\) and then \eqref{eq:nest1} coincides with \eqref{eq:nest}. The parameter values \(\alpha=1/L\), \(\beta = (1-\sqrt{1/\kappa})/(1+\sqrt{1/\kappa})\) in \eqref{eq:nest} are of course the \lq\lq standard\rq\rq\  choice for Nesterov's algorithm (see e.g.\  \cite[Proposition 12]{LRP16}). For this choice of parameters and \(x_{-1} = x_0\), the bound in Theorem~\ref{th:1} exactly coincides
 (including the value of \(C\)) with that in \eqref{eq:boundfrombook}, which is
 derived in \cite[Theorem 2.2.3]{N14} without using Lyapunov functions. Numerical experiments in \cite{LRP16}
show that for \(\kappa^{-1} = m/L\) small the rate of convergence \(\rho^2 = 1-\sqrt{1/\kappa}\) is essentially the best that the algorithm achieves.

The theorem may also be applied to the GD algorithm with \(\beta=0\) and
\(b=1/\delta\), even though (see Remark~\ref{rem:gradient}) in this case the preceding treatment is unnatural. One finds \(r=\delta\), so that the decay per step in \( f(x_k)-f(x_\star)\) provided by Theorem~\ref{th:1} is
\(
\rho^2 = 1-\delta^2 = 1- m\alpha
\), for \(\alpha \leq 1/L\).
\modk{When \(\alpha=2/(m+L)\), the decay per step guaranteed by Theorem~\ref{th:1} is \(\rho^2 = \frac{1-1/\kappa}{1+1/\kappa}\); this is worse than the bound in \eqref{eq:GDbound} valid for the same value of \(\alpha\)}.

\begin{remark}\label{rem:dim2} The decay rate \(\rho^2\) provided by the theorem is a non-dimensional quantity that only depends on the non-dimensional variables \(b\) and \(\delta\). The bound \(\alpha\leq 1/L\) may be rewritten in the non-dimensional form as \(\delta^2\leq m/L=1/\kappa\). These facts guarantee that the theorem is equivariant with respect to changes in scale of \(f\) and \(x\). The Lyapunov function in \eqref{eq:theo} has the dimensions of \(f\) because, according to \eqref{eq:Pbis}, \(P\) has the dimensions of \(m\), i.e.\ those of \(f/\|x\|^2\).
\end{remark}

{\color{black}\begin{remark}\label{rem:comparisonLyapunov} For the particular choice of \(\alpha\) and \(\beta\) leading to \eqref{eq:nest}, the Lyapunov function in the theorem above was derived in
\cite{LO20}  by means of an alternative technique (see Remark~\ref{rem:modified}).
In \cite{SDJ18} a Lyapunov function that contains the gradient \(\nabla f(x)\) is constructed analytically for the situation where the learning rate \(\alpha\) in \eqref{eq:nest1} is a free parameter and the momentum parameter is fixed as \(\beta = (1-\sqrt{m\alpha})/(1+\sqrt{m\alpha})\) (i.e.\ at the value that according to the analysis above
optimizes \(\rho^2\)). The analysis in \cite{SDJ18} requires (see Lemma 3.4 in that reference) \(\alpha \leq 1/(4L)\), while here \(\alpha\leq 1/L\).
In addition for \(\alpha= 1/(4L)\), \cite[Theorem 3]{SDJ18} proves a rate \(1/(1+(1/12)\sqrt{m/L})\)
which, while establishing acceleration, compares unfavourably with the value \(1-(1/2)\sqrt{m/L}\) provided by Theorem~\ref{th:1}. %\modkzz{As we will show in the next section this relates directly to the choice of $\widehat P$, and the fact that our choice is optimal in terms of achieving the best possible rate within this class of Lyapunov functions.}
\end{remark}
}

\subsection{Optimality}

The path leading to Theorem~\ref{th:1} has a degree of arbitrariness and it may be asked whether,
by following an alternative construction,  it is possible
to determine the parameters \(\rho\), \(p_{11}\), \(p_{12}\), \(p_{22}\) and  in such a way that \(\widehat T\preceq 0\), \(\widehat P\succeq 0\)
and the value of \(\rho\) is larger than the value provided in Theorem~\ref{th:1}. We conclude this section by presenting a result in this direction. {\color{black}We fix the parameters in the algorithm at the standard choices
 i.e.\ \(\alpha=1/L\), \(\beta=(1-\delta)/(1+\delta)\), \(\delta=\sqrt{m/L}\), and
denote by \(\rho^\star = \sqrt{1-\delta}\), \(p^\star_{11} = (m/2) (1-\delta)^2\), \(p^\star_{12} = (m/2)(1-\delta)\), \(p^\star_{22} = m/2\) the values yielded by Theorem~\ref{th:1}.
In the space of the decision variables \(\rho\), \(p_{11}\), \(p_{22}\), \(p_{33}\) we pose  the convex optimization problem of minimizing \(\rho\) subject to the constraints \(\widehat T\preceq 0\), \(\widehat P\succeq 0\).
We then have the following result that shows that the rate provided in Theorem~\ref{th:1} cannot be improved with an alternative choice of \(\widehat P\).

\begin{theorem} With the notation just described, the unique solution of the minimization problem is \((\rho^\star, p_{11}^\star,   p_{12}^\star, p_{22}^\star)\).
\end{theorem}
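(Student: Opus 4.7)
The problem is to minimize $\rho$ over $(\rho^2, p_{11}, p_{12}, p_{22})$ subject to the two linear matrix inequalities $\widehat T(\rho^2, p) \preceq 0$ and $\widehat P(p) \succeq 0$. For each fixed $\rho$ the inner feasibility problem is a convex SDP in $p$, so my plan is to produce KKT/SDP-duality certificates at the candidate optimum $(\rho^\star, p^\star)$, verify strict complementarity, and then use the SDP theorem of alternatives to rule out feasibility for $\rho<\rho^\star$.

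The starting point is the structure already worked out in the proof of Theorem~\ref{th:1}: there one has $\widehat T^\star = \mathrm{diag}(t_{11}^\star, 0, 0)$ with $t_{11}^\star < 0$, so $\widehat T^\star$ is rank $1$ with null space $\mathrm{span}(e_2, e_3)$. Moreover, setting $r=1$ in \eqref{eq:Pbis} yields the factorisation $\widehat P^\star = (m/2)\,v^\star (v^\star)^{\T}$ with $v^\star = (1-\delta, 1)^{\T}$, so $\widehat P^\star$ is rank $1$ with null space spanned by $w^\star = (1, -(1-\delta))^{\T}$. Complementary slackness therefore forces any dual variable $\Lambda \succeq 0$ associated with $\widehat T \preceq 0$ to satisfy $\Lambda e_1 = 0$ (i.e.\ $\Lambda$ lives in the $2\times 2$ block indexed by $\{2, 3\}$), and any dual $\Sigma \succeq 0$ associated with $\widehat P \succeq 0$ to have the form $\Sigma = \sigma\, w^\star (w^\star)^{\T}$ with $\sigma \geq 0$.

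Substituting these ansatzes into the three stationarity equations $\mathrm{tr}(\Lambda\, \partial_{p_{ij}} \widehat T) = \mathrm{tr}(\Sigma\, \partial_{p_{ij}} \widehat P)$ together with the Lagrangian normalisation $1 + \mathrm{tr}(\Lambda\, \partial_{\rho^2} \widehat T) = 0$ produces a $4 \times 4$ linear system in the unknowns $\lambda_{22}, \lambda_{23}, \lambda_{33}, \sigma$, with coefficients polynomial in $\delta$ obtained from \eqref{eq:nuevoT} specialised to $\alpha = 1/L$ and $\beta = (1-\delta)/(1+\delta)$. The system decouples pleasantly: stationarity in $p_{11}$ and $p_{22}$ expresses $\lambda_{33}$ and $\lambda_{22}$ as explicit multiples of $\sigma$, stationarity in $p_{12}$ fixes $\lambda_{23}$, and the normalisation pins down $\sigma$. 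I expect the main technical obstacle to be the direct verification that the resulting $\sigma$ is strictly positive and that the $2\times 2$ block $(\lambda_{ij})_{i,j\in\{2,3\}}$ is strictly positive definite; this reduces to showing that a concrete rational expression in $\delta \in (0,1)$ is positive throughout that range.

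Once the strictly complementary certificate is in hand, the SDP theorem of alternatives applied to the inner problem (with the certificate perturbed smoothly in $\rho$ via the implicit function theorem around $\rho^\star$) yields infeasibility of the inner SDP for $\rho<\rho^\star$, and hence optimality of $\rho^\star$. Uniqueness of the minimiser $p^\star$ follows directly from strict complementarity: any other primal-optimal $p$ at $\rho=\rho^\star$ must satisfy $\widehat T(\rho^\star, p)\,e_j = 0$ for $j = 2, 3$ together with $\widehat P(p)\,w^\star = 0$, a linear system in $(p_{11}, p_{12}, p_{22})$ whose only solution is $p^\star$.
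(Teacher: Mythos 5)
Your active-set analysis is correct---at the candidate point one indeed has $\widehat T^\star = \mathrm{diag}(t_{11}^\star,0,0)$ with $t_{11}^\star<0$, and $\widehat P^\star=(m/2)\,v^\star (v^\star)^{\T}$ with $v^\star=(1-\delta,1)^{\T}$---and the dual-certificate architecture you describe is a legitimate strategy. But as written the proposal has a genuine gap: the entire argument rests on the existence of multipliers $\Lambda\succeq 0$ (supported on the $\{2,3\}$ block and strictly positive definite there) and $\sigma>0$ solving the stationarity system, and you neither compute them nor verify these sign conditions. That verification is not a routine afterthought; it is the mathematical content of the theorem. If the resulting $\sigma$ or the $\{2,3\}$ block of $\Lambda$ turned out not to have the required signs, the whole strategy would collapse, so until the ``concrete rational expression in $\delta$'' is actually exhibited and shown positive on $(0,1)$, nothing has been established. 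A second, smaller gap: your theorem-of-alternatives step, with the certificate continued by the implicit function theorem, only rules out feasibility for $\rho$ in a left neighbourhood of $\rho^\star$; to exclude \emph{all} $\rho<\rho^\star$ you still need either joint convexity of the feasible set in $(\rho^2,p)$---not obvious, since $\widehat T$ contains the bilinear term $-\rho^2 P$---or a monotonicity argument in $\rho$, and this should be addressed explicitly rather than absorbed into ``SDP theorem of alternatives''.

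For comparison, the paper avoids duality entirely. It perturbs the primal variables, $\sigma=\sigma^\star+\wsigma$ with $\wsigma\leq 0$ and $p_{ij}=p_{ij}^\star+\widetilde p_{ij}$, extracts three scalar consequences of feasibility---the linearization of $\det\widehat P\geq 0$, the $2\times 2$ minor condition $t_{22}t_{33}-t_{23}^2\geq 0$ rewritten by completing a square, and $t_{22}\leq 0$---and then shows by elementary sign-chasing in the accompanying lemma that the only admissible perturbation is zero; convexity upgrades this local statement to global optimality and uniqueness. The primal route buys a fully explicit, self-contained computation at the cost of some ad hoc inequality manipulation; your dual route would be structurally cleaner and would expose the active constraints more transparently, but only once the analogous amount of explicit computation (solving the $4\times 4$ multiplier system and checking positivity) is actually carried out.
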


\begin{proof}
We use the notation \(\sigma=\rho^2\),  \(\sigma^\star = (\rho^\star)^2\) and write  \(\sigma = \sigma^\star+\wsigma\), \(p_{11} = p_{11}^\star+\wpoo\), \(p_{12} = p_{12}^\star+\wpot\), \(p_{22} = p_{22}^\star+\wptt\). Since the minimization problem is convex, it is sufficient to show that \(\rho^\star\), \(p_{11}^\star\), \(p_{12}^\star\), \(p_{22}^\star\)  provide a local minimum, i.e.\ that if the increments \(\wsigma\leq 0\),
\(\wpoo\), \(\wpot\), \(\wptt\) are of sufficiently small magnitude and \((\sigma, p_{11}, p_{12}, p_{22})\) is feasible, then
\(\sigma = \sigma^\star\), \(p_{11} = p_{11}^\star\), \(p_{12} = p_{12}^\star\), \(p_{22} = p_{22}^\star\).

We study three requirements that feasibility imposes on  \(\wsigma\), \(\wpoo\), \(\wpot\), \(\wptt\).

(1) First, the constraint  \(\widehat P\succeq 0\) implies that \(p_{11}p_{22}-p_{12}^2\geq 0\) or
\[
p_{22}^\star \wpoo -2 p_{12}^\star \wpot +p_{11}^\star \wptt + \wpoo\wptt- (\wpot)^2\geq 0.
\]
Because we are carrying a local study, we replace the constraint by its linearization
\[
p_{22}^\star \wpoo -2 p_{12}^\star \wpot +p_{11}^\star \wptt \geq 0.
\]
or, after using the known values of the symbols with a star,
\begin{equation}\label{eq:consp}
 \wpoo -2 (1-\delta) \wpot +(1-\delta)^2 \wptt \geq 0.
\end{equation}

(2) Then, the constraint \(\widehat T\preceq 0\) implies \(t_{22}t_{33}- t_{23}^2\geq 0\) or, using \eqref{eq:nuevoT},
\begin{eqnarray*}
&&-\Big(\frac{1}{2} \wsigma+ \frac{\delta}{m} \wpot+\frac{\delta^2}{m} \wptt\Big)^2+  \frac{\delta^3}{m^2} \wptt \big(\wpoo+2\delta\wpot+\delta^2\wptt\big)\\&&\qquad\qquad\qquad\qquad\qquad\qquad\qquad\qquad\qquad
-\frac{\delta^2}{m^2} \wsigma\wptt \big(\wpoo+2\delta\wpot+\delta^2\wptt\big) \geq 0.
\end{eqnarray*}
This time the leading terms in the right hand-side are quadratic in the increments and we discard the cubic terms to get:
\begin{equation}\label{eq:cons23}
-\Big(\frac{m}{2} \wsigma+ \delta \wpot+\delta^2 \wptt\Big)^2+  \delta^3 \wptt \big(\wpoo+2\delta\wpot+\delta^2\wptt\big)\geq 0.
\end{equation}
By completing the square in the quadratic form, this may be equivalently rewritten as
\begin{equation}\label{eq:completesquare}
\Big(\frac{m}{2} \wsigma+ \delta \wpot+\delta^2 \wptt\Big)^2
+\delta\Big(\frac{1}{2} \wpoo+\delta \wpot\Big)^2
\leq \delta \Big(\frac{1}{2} \wpoo+\delta \wpot+\delta^2 \wptt\Big)^2.
\end{equation}

(3) Finally \(\widehat T \preceq 0\) requires \(t_{22}\leq 0\) or
\(
\wptt(\delta-\wsigma) \leq 0
\);
 discarding the quadratic term, we get
\begin{equation}\label{eq:consp22}
\wptt  \leq 0.
\end{equation}

The proof concludes by applying the lemma below.
\end{proof}
%%%%%%%%%%%%%%%%%%%%%%%%%%%%%%%%%%%%%%%%%%%%%%
\begin{lemma}If  the increments \(\wsigma\leq 0\), \(\wpoo\), \(\wpot\), \(\wptt\) satisfy the constraints
\eqref{eq:consp}--\eqref{eq:consp22}, then \(\wsigma= 0\), \(\wpoo=0\), \(\wpot=0\), \(\wptt=0\).
\end{lemma}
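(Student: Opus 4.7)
The plan is a case split on the sign of $\wptt$, which \eqref{eq:consp22} constrains to be $\le 0$, combined with a careful exploitation of the sum-of-squares form \eqref{eq:completesquare}.

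In the boundary case $\wptt=0$, the RHS of \eqref{eq:completesquare} equals the $\delta(\wpoo/2+\delta\wpot)^2$ term on the LHS, so the inequality collapses to $(m\wsigma/2+\delta\wpot)^2\le 0$. This forces $\wsigma=-2\delta\wpot/m$, and the sign constraint $\wsigma\le 0$ yields $\wpot\ge 0$; together with \eqref{eq:consp} we obtain $\wpoo\ge 2(1-\delta)\wpot\ge 0$. To pin down the remaining freedom I would invoke the diagonal inequality $t_{33}\le 0$ from $\widehat T\preceq 0$, which is active at the starred point since $t^\star_{33}=0$ under the standard choice of parameters. Its leading-order linearization reads $\wpoo+2\delta\wpot+\delta^2\wptt\le 0$, which with $\wptt=0$ gives $\wpoo\le -2\delta\wpot$. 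Sandwiched with the earlier lower bound this forces $2\wpot\le 0$, hence $\wpot=0$, $\wsigma=0$, and $\wpoo=0$.

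For $\wptt<0$, write $c=-\wptt>0$ and $\xi=-(\wpoo+2\delta\wpot+\delta^2\wptt)$; since $\wptt<0$, \eqref{eq:cons23} already forces $\xi\ge 0$ and rearranges to $|Z|\le\delta^{3/2}\sqrt{c\xi}$, where $Z=m\wsigma/2+\delta\wpot+\delta^2\wptt$. Demanding that some $\wsigma\le 0$ be compatible with this $|Z|$ bound rearranges to the lower bound $\wpot\ge \delta c-\sqrt{\delta c\xi}$. Independently, eliminating $\wpoo$ from \eqref{eq:consp} using the definition of $\xi$ yields the upper bound $\wpot\le -(1-2\delta)c/2-\xi/2$. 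Concatenating, moving terms to one side and squaring produces
\[
(c+\xi)^2\le 4\delta c\xi,\qquad\text{equivalently}\qquad c^2+(2-4\delta)c\xi+\xi^2\le 0,
\]
a quadratic in $\xi$ with discriminant $16\delta(\delta-1)c^2<0$ for $\delta\in(0,1)$. The quadratic is therefore strictly positive whenever $c>0$, contradicting the displayed inequality, so this case is vacuous.

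The main obstacle is Case 1: the explicit constraints \eqref{eq:consp}--\eqref{eq:consp22} together with $\wsigma\le 0$ admit the spurious one-parameter family $(\wsigma,\wpoo,\wpot,\wptt)=(-2\delta t/m,\,2(1-\delta)t,\,t,\,0)$ with $t>0$, so an additional ingredient drawn from $\widehat T\preceq 0$ is essential. The cleanest is the linearization of $t_{33}\le 0$ used above; alternatively one may exploit the second-order content of $t_{11}t_{22}-t_{12}^2\ge 0$, which with $t_{11}^\star<0$ and $t_{22}=0$ forces $t_{12}=0$ and hence $\wpot=0$. Case 2, by contrast, reduces to a tidy discriminant computation once the two bounds on $\wpot$ are paired.
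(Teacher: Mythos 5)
Your proposal is correct, and it exposes a genuine defect in the statement you were asked to prove: the family \((\wsigma,\wpoo,\wpot,\wptt)=(-2\delta t/m,\;2(1-\delta)t,\;t,\;0)\), \(t>0\), really does satisfy \(\wsigma\le 0\) together with \eqref{eq:consp}--\eqref{eq:consp22} (both \eqref{eq:consp} and \eqref{eq:completesquare} hold with equality, and \(\wptt=0\)), so the lemma is false as literally stated. The paper's own argument breaks at \eqref{eq:aux1}: from \(a^2\le (a+c)^2\) with \(c=\delta^2\wptt\le 0\) one may only conclude \(a\le -c/2\) when \(c<0\), and nothing at all when \(c=0\); the inequality does not give \(a\le 0\), and your family sits exactly in the degenerate case \(c=0\). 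Your repair is legitimate: in the enclosing theorem feasibility means \(\widehat T\preceq 0\), the entry \(t_{33}\) is linear in \(P\) and vanishes at the starred point for \(\alpha=1/L\), so \(t_{33}\le 0\) is exactly (not merely to leading order) the extra linear constraint \(\wpoo+2\delta\wpot+\delta^2\wptt\le 0\), and this is what the lemma's hypotheses should be augmented with. The theorem itself survives, since your spurious family violates that constraint and is therefore infeasible.

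With the extra constraint both of your cases check out. For \(\wptt=0\) the right-hand side of \eqref{eq:completesquare} equals the second term on the left, forcing \(\frac{m}{2}\wsigma+\delta\wpot=0\), hence \(\wpot\ge 0\) and \(\wpoo\ge 2(1-\delta)\wpot\) from \eqref{eq:consp}, while the new constraint gives \(\wpoo\le -2\delta\wpot\); adding yields \(2\wpot\le 0\) and everything collapses to zero. For \(\wptt<0\) your two bounds combine to \((c+\xi)/2\le\sqrt{\delta c\xi}\), which is impossible for \(c>0\), \(\xi\ge 0\), \(\delta\in(0,1)\) by your discriminant computation (or at once by AM--GM, since \((c+\xi)/2\ge\sqrt{c\xi}>\sqrt{\delta c\xi}\) when \(c\xi>0\), and the case \(\xi=0\) is immediate); note this half needs only the original constraints. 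Architecturally, your case split on the sign of \(\wptt\) replaces the paper's chain of sign deductions \eqref{eq:aux1}--\eqref{eq:cons11}, and it is precisely this split that makes the degenerate direction visible. The secondary suggestion of instead extracting second-order information from \(t_{11}t_{22}-t_{12}^2\ge 0\) is plausible but left unverified; the \(t_{33}\) route is the clean one and should be adopted.
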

\begin{proof}
The relation \eqref{eq:completesquare} obviously implies
\[
\Big(\frac{1}{2} \wpoo+\delta \wpot\Big)^2
\leq  \Big(\frac{1}{2} \wpoo+\delta \wpot+\delta^2 \wptt\Big)^2
\]
and therefore, in view of \eqref{eq:consp22},
\begin{equation}\label{eq:aux1}
\frac{1}{2} \wpoo+\delta \wpot\leq 0.
\end{equation}
We combine this inequality with \eqref{eq:consp} to get
\[
0\leq -2  \wpot+(1-\delta)^2 \wptt
\]
so that
\begin{equation}\label{eq:cons12}
\wpot \leq 0.
\end{equation}

Since the three quantities being added in the first bracket in \eqref{eq:completesquare} are now known to be \(\leq 0\), it is enough to consider hereafter the worst case \(\wsigma = 0\).
\[
\Big( \delta \wpot+\delta^2 \wptt\Big)^2
\leq \delta \Big(\frac{1}{2} \wpoo+\delta \wpot+\delta^2 \wptt\Big)^2.
\]
Since \(\delta \wpot+\delta^2 \wptt\leq 0\), we must have
\begin{equation}\label{eq:cons11}
\wpoo \leq 0.
\end{equation}

From \eqref{eq:consp}
\[
\wpoo+2\delta \wpot+\delta^2 \wptt
 \geq 2  \wpot +(-1+2\delta) \wptt,
\]
which implies (see \eqref{eq:consp22}, \eqref{eq:cons12}, \eqref{eq:cons11})
\[
\wptt (\wpoo+2\delta \wpot+\delta^2 \wptt) \leq 2  \wpot\wptt +(-1+2\delta) \wptt^2.
\]
By  combining this inequality and \eqref{eq:cons23} (with \(\wsigma = 0\)), we obtain a relation
\[\delta^2 \wpot^2+\delta^3 (1-\delta) \wptt^2 \leq 0,
\]
that shows that \(\wpot=0\). Then comparing \eqref{eq:consp}, \eqref{eq:consp22} and \eqref{eq:cons11}, we conclude that \(\wpoo=\wptt=0\), \modkzz{which in turn concludes  the proof.}
\end{proof}
%end of blue color
}

\section{The differential equation}\label{sec:ode}

Let us now set \(h=\sqrt{\alpha}\) (so that \(\delta = \sqrt{m} h\)) and assume that in  \eqref{eq:nest1}, the parameter \(\beta=\beta_h\) changes smoothly with \(h\) in such a way that, for some constant \(\bar b\in\R\), \(\beta_h = 1- \bar b
\sqrt{m} h+o(h)\) as \(h\downarrow 0\). Then,  \eqref{eq:nest1} may be written as
\[
\frac{1}{h^2} (x_{k+1}-2x_k+x_{k-1})+ \frac{1-\beta_h}{\sqrt{m}h} \sqrt{m}\frac{1}{h}(x_k-x_{k-1})+\nabla f(y_k)=0,
\]
which, if \(x_k\) is seen as an approximation to \(x(kh)\), provides a consistent discretization of the differential equation
\eqref{eq:Pol_ODE1}. An example is provided by the choice
\(\beta = (1-\delta)/(1+\delta) = (1-\sqrt{m} h)/(1+\sqrt{m} h)\), where \(\bar b = 2\) and \eqref{eq:Pol_ODE1} is the equation
\eqref{eq:Pol_ODE} used by Polyak.

\begin{remark} \label{re:oneleg} In general, this two-step discretization is,  not a linear multistep formula. Note:
\begin{itemize}
\item \(\nabla f\) is evaluated at \(y_k\), a linear combination of \(x_k\) and \(x_{k-1}\). In this regard, \eqref{eq:nest1} is similar to the \emph{one-leg} methods introduced by Dahlquist in his study of the long-time properties of multistep methods applied to nonlinear differential equations  (see e.g.\ \modk{\cite{GD76,B16,HaW96}})
%*** REFERENcE  TO Hairer y Wanner THAT WAS REFERENCE [4] IN THE OTHER PAPER WE WROTE  ).
\item The unconventional factor \((1-\beta_h)/(\sqrt{m}h)\) that converges to \(\bar b\) as \(h\downarrow 0\). From the point of view of discretization methods for ODEs having \(\bar b\) instead of this factor, or equivalently having
    \(\beta = 1-\bar b \sqrt{m} h\), would be more natural. But note that, when \(\beta = (1-\sqrt{m} h)/(1+\sqrt{m} h)\), the algorithm \eqref{eq:nest1} becomes GD for \(h=1/\sqrt{L}\) and \(\kappa=1\); the choice \(\beta = 1-\bar b \sqrt{m} h\) does not share this favourable property.
\end{itemize}
\end{remark}

\subsection{The construction}% color blue has ended

We  now define
\[
v = \frac{1}{\sqrt{m}} \dot x
\]
and rewrite \eqref{eq:Pol_ODE1} as a first-order system
\begin{subequations}\label{eq:ode1}
\begin{align}
\dot v &= -\bar b \sqrt{m} v -\frac{1}{\sqrt{m}} \nabla f(x),\\% a typo here has been corrected 21 dec 2020
\dot x &= \sqrt{m} v.
\end{align}
\end{subequations}

\begin{remark}In  a dimensional analysis as in Remarks~\ref{rem:dimensional} and \ref{rem:dim2}, \(h\) has the same units as \(t\). It is then a dimensional time-step, to be compablue with the non-dimensional  \(\delta\). The units of \(v\) are those of \(x\). Of course,
the divided difference \eqref{eq:dd} is a discrete version of \(v=\dot x/\sqrt{m}\).
\end{remark}
If we  set \(\xi = [v^{\T},x^{\T}]^{\T}\), then \eqref{eq:ode1} is of
the form \eqref{eq:con_system} with
\[
\bar A = \left[\begin{matrix}-\bar b\sqrt{m}I_d & 0_d\\ \sqrt{m} I_d& 0_d\end{matrix}\right],\quad
 \bar B = \left[\begin{matrix}-(1/\sqrt{m}) I_d\\ 0_d\end{matrix}\right],
\quad
 \bar C = \left[\begin{matrix} 0_d & I_d\end{matrix}\right],
\]

Now according to Theorem~\ref{theo:main_con},  in order  to find a Lyapunov function of the form \eqref{eq:liap_con} it is sufficient to find a matrix $\bar{P} \succeq 0$ and parameters $\lambda>0$, $\sigma \geq 0$ such that the matrix $\bar{T}$ in \eqref{eq:LMI_con} is negative semi-definite. Similarly to the discrete case, we will simplify the subsequent analysis by considering the case $\sigma=0$. (The case \(\sigma >0\) is studied in the Appendix.) The Lipschitz constant \(L\) only enters \(T\) in Theorem~\ref{theo:main_con} through \(\bar M^{(3)}\); under the assumption \(\sigma =0\), \(\bar T\) is independent of \(L\). This has an important implication: the analysis in this section applies to \(f\) strongly \(m\)-convex but \emph {not necessarily \(L\)-smooth}.

We look for \(\bar P\) of the form
\begin{equation}\label{eq:Pcont}
\bar P = \widehat {\bar P} \otimes I_d,\qquad
\widehat {\bar P} = \left[\begin{matrix}\bar p_{11}  & \bar p_{12}\\ \bar p_{12} & \bar p_{22}\end{matrix}\right],
\end{equation}
and then \(\bar  T\) is found to be
\begin{equation}\label{eq:Tbis}
\bar T = \widehat {\bar T}\otimes I_d,\qquad
\widehat {\bar T} = \left[\begin{matrix}\bar t_{11}  & \bar t_{12}&\bar t_{13}\\ \bar t_{12} & \bar t_{22}&\bar t_{23}\\\bar t_{13} & \bar t_{23}&\bar t_{33}\end{matrix}\right],
\end{equation}
where the \(\bar t_{ij}\) have the following expressions:
\begin{align*}
  \bar t_{11} &=  -2\bar b \bar p_{11}+2\sqrt{m} \bar p_{12}+\lambda \bar p_{11},\\
  \bar t_{12} &= -\bar b \sqrt{m} \bar p_{12}+\sqrt{m} \bar p_{22}+\lambda \bar p_{12}, \\
  \bar t_{13} &= -(1/\sqrt{m})\bar p_{11}+\sqrt{m}/2, \\
  \bar t_{22} &= \lambda  \bar p_{22}-(m/2)\lambda,\\
  \bar t_{23} &= -(1/\sqrt{m})\bar p_{12}+\lambda/2,\\
  \bar t_{33}&=  0.
\end{align*}

We now determine \(\lambda\) and \(\widehat{\bar P}\).
The algebra  is simplified if we set \(\lambda = \sqrt{m}\:{\bar r}\).

{\em First step.} Since \(\bar t_{33}=0\), the requirement \(\widehat {\bar T}\preceq 0\) implies \(\bar t_{13}=0\) and \(\bar t_{23}=0\) and accordingly
\begin{equation}\label{eq:p11p12}
\bar p_{11} = m/2,\qquad \bar p_{12} = (m/2){\bar r}.
\end{equation}

{\em Second step.}
We choose \(\bar p_{22}\) to ensure \({\rm det}(\widehat{\bar P}) = \bar p_{11}\bar p_{22}-\bar p_{12}^2=0\). This yields
\[
\bar p_{22} = (m/2) {\bar r}^2,
\]
and leads to
\begin{equation}\label{eq:Pcontbis}
\widehat {\bar P} = \frac{m}{2} \left[\begin{matrix} 1&{\bar r}\\ {\bar r} & {\bar r}^2\end{matrix}
\right],
\end{equation}
a matrix that is positive-semidefinite (but not positive definite).

{\em Third step.} Since,  \(\widehat {\bar T}\preceq 0\) implies  \( \bar t_{22}\leq 0\), we may write \( 0\geq \bar p_{22}-m/2 = (m/2)({\bar r}^2-1)\), and therefore we have
\[
{\bar r} \leq 1;
\] this imposes a bound \(\lambda \leq \sqrt{m}\) on the convergence rate.

{\em Fourth step.}
We impose the condition \(\bar t_{11}\bar t_{22}-{\bar t}_{12}^2=0\). This results in an equation \(\bar \Xi =0\),
\begin{equation}\label{eq:Xibar}
\bar \Xi({\bar r},\bar b) = {\bar r} b^2 - 2 ({\bar r}^2+1) b+{\bar r}^3+3{\bar r},
\end{equation}
that  relates  \({\bar r}\) (or equivalently the rate \(\lambda\)) and the parameter \(\bar b\) in the differential equation \eqref{eq:Pol_ODE1}.

We observe that the polynomial \(\bar \Xi\) is the limit as \(\delta\downarrow 0\) of the polynomial \(\Xi_\delta\) in \eqref{eq:Xi} (except of course for the symbols used to denote the variables: \(r\) and \(b\) for \(\Xi_\delta\) and \({\bar r}\) and \(\bar b\) for \(\bar \Xi\)). As a consequence, the discontinuous line in Figure~\ref{figure}, presented there as a limit of curves \(\Xi_\delta=0\), also describes the curve \(\bar \Xi =0\) (again after renaming the variables).

The curve of equation \(\bar \Xi({\bar r}, \bar b)=0\) in the \(({\bar r},\bar b)\) plane is invariant with respect to the symmetry \(({\bar r},\bar b)\mapsto (-{\bar r},-\bar b)\) (this is a consequence of the fact that changing \(\bar b\) into \(-\bar b\) in the differential equation is equivalent to reversing the sign of independent variable \(t\)).\footnote{The curves
\(\Xi_\delta(r,b)=0\), \(\delta>0\) do not possess any symmetry because in the discrete algorithm \eqref{eq:nest1}, \(x_{k+1}\) and \(x_{k-1}\) do nor play a symmetric role (or in the terminology of differential equation integrators we are not dealing with time-symmetric algorithms).}  The formula for the roots of a quadratic equation gives
\[
\bar b_{\pm} = \frac{1+{\bar r}^2\pm\sqrt{1-{\bar r}^2}}{{\bar r}}.
\]
From here one may prove that to each real \(\bar b\) there corresponds a unique  \({\bar r}\) such that
\(\bar \Xi({\bar r}, \bar b)=0\).
The maximum value  \({\bar r} =1\) (\(\lambda =\sqrt{m}\)) is achieved only for \(\bar b =2\) (i.e.\ for Polyak's \eqref{eq:Pol_ODE}) and values \({\bar r}\in (0,1)\) correspond to two different real values of  \(\bar b\).

We now have the following result that is proved as in the discrete case.

\begin{theorem}\label{th:2}Consider the differential equation \eqref{eq:Pol_ODE1} (or the equivalent system \eqref{eq:ode1}) with parameter \(\bar b>0\) and assume that \(f\) is \(m\)-strongly convex.
Let \(\lambda = \sqrt{m} {\bar r}\), where \({\bar r}>0\) is the value determined by the relation \(\bar \Xi({\bar r},\bar b) = 0\) (see \eqref{eq:Xibar}) and define the positive semi-definite matrix
\( {\bar P}\) by \eqref{eq:Pcont} and  \eqref{eq:Pcontbis}. Then the matrix \(\bar T\) in \eqref{eq:Tbis} is negative semi-definite.

As a result,  if \(x(t)\) is a solution of \eqref{eq:Pol_ODE1},
the function
\begin{equation}\label{eq:theobis}
\exp(\lambda t)\Big( f(x(t))-f(x_\star)+ [v(t)^{\T},x(t)^{\T}-x_\star^{\T}]\,\bar P\,[v(t)^{\T},x(t)^{\T}-x_\star^{\T}]^{\T}\Big)
\end{equation}
decreases monotonically as \(t\) increases, which implies
 \[f(x(t))-f(x_\star)\leq \bar C \exp(-\lambda t)\] with
 \[
 \bar C = f(x(0))-f(x^\star) + \frac{m}{2} \left\| \frac{1}{\sqrt{m}} \dot x(0)+{\bar r} (x(0)-x^\star)\right\|^2.
 \]
\end{theorem}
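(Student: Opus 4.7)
The plan is to mirror exactly the argument used to prove Theorem~\ref{th:1}, exploiting the fact that the construction preceding the statement of Theorem~\ref{th:2} was designed precisely so that the hypotheses of Theorem~\ref{theo:main_con} are satisfied with $\sigma=0$. First, I will invoke Theorem~\ref{theo:main_con} and note that, because $\bar P = \widehat{\bar P}\otimes I_d$ and $\bar T = \widehat{\bar T}\otimes I_d$, it is enough to verify $\widehat{\bar P}\succeq 0$ and $\widehat{\bar T}\preceq 0$. The former has already been established in the second step of the construction, where the choice $\bar p_{22}=(m/2)\bar r^2$ makes $\widehat{\bar P}$ a rank-one PSD matrix (see \eqref{eq:Pcontbis}).

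For the negative semi-definiteness of $\widehat{\bar T}$, I will observe that the first step of the construction was designed so that $\bar t_{13}=0$ and $\bar t_{23}=0$, while $\bar t_{33}=0$ holds automatically from the definition of $\bar M^{(0)}$. Consequently the entire third row and third column of $\widehat{\bar T}$ vanish, and $\widehat{\bar T}\preceq 0$ reduces to showing that the $2\times 2$ principal submatrix
\[
\widehat{\bar T}^{12} = \left[\begin{matrix} \bar t_{11} & \bar t_{12} \\ \bar t_{12} & \bar t_{22}\end{matrix}\right]
\]
is negative semi-definite. The third step of the construction gives $\bar t_{22}\leq 0$ (from $\bar r\leq 1$), and the fourth step gives $\det(\widehat{\bar T}^{12})=\bar t_{11}\bar t_{22}-\bar t_{12}^2=0$. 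For $\bar r<1$ we have $\bar t_{22}<0$ strictly, which combined with the vanishing determinant forces $\widehat{\bar T}^{12}\preceq 0$.

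The only subtle case is the boundary one $\bar r=1$ (i.e.\ $\bar b=2$), where $\bar t_{22}=0$. Here the vanishing determinant yields $\bar t_{12}=0$, so $\widehat{\bar T}^{12}$ is diagonal and PSD will hold provided $\bar t_{11}\leq 0$; this I will verify by direct substitution of $\bar r=1$, $\bar b=2$, $\bar p_{11}=m/2$, $\bar p_{12}=m/2$, $\lambda=\sqrt{m}$ into the expression for $\bar t_{11}$, the analog of the computation that handled $r=1$ at the end of the proof of Theorem~\ref{th:1}. This boundary verification is the one place where the proof requires a short explicit calculation, and I expect it to be the only potential obstacle.

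With $\widehat{\bar T}\preceq 0$ established, Theorem~\ref{theo:main_con} immediately yields that $\bar V(\xi(t))$ in \eqref{eq:liap_con} is non-increasing along solutions, which is precisely the monotonicity assertion about \eqref{eq:theobis}. To conclude, I will evaluate $\bar V$ at $t=0$: using $\xi(0)-\xi^\star = [v(0)^\T,(x(0)-x^\star)^\T]^\T$, the rank-one structure of $\widehat{\bar P}$ in \eqref{eq:Pcontbis} gives
\[
(\xi(0)-\xi^\star)^\T \bar P (\xi(0)-\xi^\star) = \frac{m}{2}\bigl\| v(0) + \bar r(x(0)-x^\star)\bigr\|^2,
\]
and substituting $v(0)=\dot x(0)/\sqrt{m}$ produces exactly the stated constant $\bar C$. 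The inequality $f(x(t))-f(x^\star)\leq \bar C e^{-\lambda t}$ then follows from dropping the (non-negative) quadratic term on the left-hand side of the monotonicity relation.
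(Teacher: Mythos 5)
Your proposal is correct and follows essentially the same route as the paper, which simply states that Theorem~\ref{th:2} "is proved as in the discrete case": reduce to the $2\times 2$ block via $\bar t_{13}=\bar t_{23}=\bar t_{33}=0$, use $\bar t_{22}<0$ plus the vanishing determinant for $\bar r<1$, and check $\bar t_{11}<0$ directly at the boundary $\bar r=1$, $\bar b=2$ (where one indeed finds $\bar t_{11}=-m^{3/2}/2<0$). Your evaluation of $\bar V$ at $t=0$ using the rank-one structure of $\widehat{\bar P}$ to obtain $\bar C$ also matches the intended argument.
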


\begin{remark} For \(\bar b = 0\), the construction leading to the theorem yields \(r=0\), i.e.\ \(\lambda=0\), and,
\[(\xi(t)-\xi_\star)^{\T}\bar P (\xi(t)-\xi_\star) = \frac{m}{2} \|v\|^2.
\] In addition, \(\bar T = 0\) and
therefore the factor in round brackets in \eqref{eq:theobis} is an invariant of motion.
 In this case the system
\eqref{eq:ode1} is Hamiltonian and the invariant we have found equals \(\sqrt{m}\) times the corresponding Hamiltonian function.
\end{remark}

{\color{black} \begin{remark} The value \(\bar b = 2\), in addition to maximizing the decay rate in \(f(x(t))\) in Theorem~\ref{th:2} for arbitrary \(m\)-strongly convex \(f\), has another optimality property in
 the simple one-dimensional case with \(f(x) = mx^2/2\), when \eqref{eq:Pol_ODE1} or \eqref{eq:ode1} describe a damped harmonic oscillator. An elementary computation (see e.g.\ \cite{LR18}) shows that \(\bar b = 2\) is the value of the friction coefficient that ensures the \emph{fastest dissipation of the energy} \( (\dot x)^2/2+  m x^2/2\).

 It will be proved in the Appendix that if \(f\), in addition to being strongly convex has Lipschitz continuous gradient, then better decay rates in \(f(x(t))\) may be obtained by choosing \(\bar b\) to be  larger than \(2\). Therefore \( (\dot x)^2/2+  m x^2/2\) is not the best Lyapunov function to study the rate of decay of \(f(x)\) in the damped harmonic oscillator. This is in agreement with Theorem~\ref{th:optimal} below.
\end{remark}}

{\color{black}Reference \cite{PS17} gives a Lyapunov function for \eqref{eq:Pol_ODE1} or \eqref{eq:ode1} that includes a cross-term \(v^T\nabla f(x)\) and does not require  the strong convexity of $f$. However, the presence of the gradient in the Lyapunov function makes it necessary that $f$ be demanded to be twice-differentiable (the Hessian of \(f\) appears when differentiating the Lyapunov function with respect to \(t\)).}

\subsection{Optimality}

Steps 2 and 4 in the construction above imply a degree of arbitrariness and it is of interest to ask whether there are alternative choices of \(\lambda\) and \(\widehat{\bar P}\succeq 0\) that, while ensuring \(\widehat{\bar T}\preceq 0\),  furnish better decay rates. We conclude this section by proving that this is not the case.

In the theorem below we use the notation \( \bar r^\star\) and \(\widehat{\bar P}^\star\) for  the values obtained, for given \(\bar b>0\), in the construction leading to Theorem~\ref{th:2}. (These are functions
\( \bar r^\star = \bar r^\star(b)\) and \(\widehat{\bar P}^\star
= \widehat{\bar P}^\star(b)\), but the dependence on \(\bar b\) will be dropped from the notation.) In particular, \(\bar p_{22}^\star ={m\bar r^\star}^2/2\) and \(\bar \Xi(\bar r^\star,\bar b)= 0\).
  The symbols
\(\lambda\) and \(\widehat{\bar P}\) are used in the theorem to refer to an arbitrary real number and an arbitrary \(2\times 2\) symmetric matrix. Finally, we set   \(\lambda^\star=\sqrt{m}\:\bar r^\star\)
 and  \(\lambda =\sqrt{m}\: \bar r\).

\begin{theorem}\label{th:optimal} With the notation as described, for each fixed \(\bar b >0\),  \(\lambda^\star = {\rm max}\: \lambda\), subject to the constraints \(\widehat{\bar T}(\lambda, \widehat{\bar P})\preceq 0\),  \(\widehat{\bar P}\succeq 0\).
\end{theorem}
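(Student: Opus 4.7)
My plan is to show every feasible $(\lambda, \widehat{\bar P})$ obeys $\lambda \leq \lambda^\star$, running the structural steps of the construction in the proof of Theorem~\ref{th:2} as forced consequences of the LMI rather than as free choices. The key simplification is that the zero diagonal entry $\bar t_{33}=0$ leaves essentially no freedom in $\widehat{\bar P}$.

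First, I exploit the elementary fact that if a symmetric negative semi-definite matrix has a zero diagonal entry, then the entire corresponding row and column vanish. Applied to the third row and column of $\widehat{\bar T}$, this yields $\bar t_{13}=0$ and $\bar t_{23}=0$. These are two linear equations in $\widehat{\bar P}$ that uniquely determine $\bar p_{11}=m/2$ and $\bar p_{12}=(m/2)\bar r$, writing $\bar r=\lambda/\sqrt{m}$. Next, with $\bar p_{11}=m/2>0$ the condition $\widehat{\bar P}\succeq 0$ reduces to $\det\widehat{\bar P}\geq 0$, i.e.\ $s:=\bar p_{22}\geq(m/2)\bar r^2$, while the diagonal condition $\bar t_{22}\leq 0$ reads $\lambda(s-m/2)\leq 0$, so for $\lambda>0$ it forces $s\leq m/2$ (the case $\lambda\leq 0$ is trivial since $\lambda^\star>0$). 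In particular $\bar r\leq 1$. The only remaining constraint is the $2\times 2$ principal minor condition $Q(s):=\bar t_{11}\bar t_{22}-\bar t_{12}^2\geq 0$, which after the above substitutions becomes a concave quadratic in $s$ with leading coefficient $-m$.

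The crux is the behaviour of $Q$ on the interval $[s_1,s_2]:=[(m/2)\bar r^2,\,m/2]$. A direct computation gives
\[
Q(s_1)=-\frac{m^3\bar r}{4}\,\bar\Xi(\bar r,\bar b),\qquad Q(s_2)=-\frac{m^3}{4}\bigl(1+\bar r^2-\bar b\bar r\bigr)^{2}\leq 0,
\]
tying the analysis back to the polynomial $\bar\Xi$ of \eqref{eq:Xibar}. The main obstacle, which I expect to follow from a cancellation upon substituting the fixed values of $\bar p_{11}$ and $\bar p_{12}$ into the definitions of $\bar t_{11},\bar t_{12},\bar t_{22}$, is to verify that the unique maximiser $s^\ast$ of $Q$ equals $s_1/2$, so in particular $s^\ast<s_1$. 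Once this is in hand, $Q$ is strictly decreasing on $[s_1,s_2]$ and $\max_{s\in[s_1,s_2]}Q(s)=Q(s_1)$.

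Granted this, feasibility of $(\lambda,\widehat{\bar P})$ is equivalent to $\bar\Xi(\bar r,\bar b)\leq 0$. The discussion of the curve $\bar\Xi=0$ following \eqref{eq:Xibar} shows that, for each fixed $\bar b>0$, $\bar\Xi(\cdot,\bar b)$ is strictly negative on $[0,\bar r^\star)$, vanishes at $\bar r^\star$, and is positive on $(\bar r^\star,1]$ (when this interval is non-empty); thus $\bar r\leq\bar r^\star$ and $\lambda=\sqrt{m}\,\bar r\leq\sqrt{m}\,\bar r^\star=\lambda^\star$, as required.
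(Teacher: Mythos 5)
Your argument is correct, and it takes a genuinely different route from the paper's. The paper invokes convexity of the optimization problem and then verifies \emph{local} optimality of \((\lambda^\star,\widehat{\bar P}^\star)\) via a Lagrangian/KKT computation: it shows the gradient of \(\mathcal L\) can be annihilated with positive multipliers, which reduces to checking that a cubic \(\Lambda(\bar r^\star,\bar b)\) stays negative along the curve \(\bar\Xi=0\). You instead give a direct \emph{global} argument: the zero diagonal entry \(\bar t_{33}=0\) forces \(\bar p_{11},\bar p_{12}\) (as in the paper's step 1), the remaining freedom is the single scalar \(s=\bar p_{22}\in[(m/2)\bar r^2,\,m/2]\), and the minor condition \(Q(s)=\bar t_{11}\bar t_{22}-\bar t_{12}^2\ge0\) is shown to be most permissive at the left endpoint, where it reduces exactly to \(\bar\Xi(\bar r,\bar b)\le0\). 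The one step you flagged as unverified does check out: with \(\bar p_{11}=m/2\), \(\bar p_{12}=(m/2)\bar r\), \(\lambda=\sqrt m\,\bar r\), one finds \(\bar t_{11}=m\sqrt m\,(\tfrac32\bar r-\bar b)\) (independent of \(s\)), \(\bar t_{22}=\sqrt m\,\bar r\,(s-\tfrac m2)\), \(\bar t_{12}=\sqrt m\,\bigl(s+\tfrac m2(\bar r^2-\bar b\bar r)\bigr)\), so \(Q'(s)=m^2\bar r(\tfrac32\bar r-\bar b)-2m\bigl(s+\tfrac m2(\bar r^2-\bar b\bar r)\bigr)\) vanishes at \(s^\ast=\tfrac{m\bar r}{2}(\tfrac32\bar r-\bar b)-\tfrac m2(\bar r^2-\bar b\bar r)=\tfrac{m\bar r^2}{4}=s_1/2\), and \(Q(s_1)=-\tfrac{m^3\bar r}{4}\bar\Xi(\bar r,\bar b)\) as you claimed. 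What your approach buys is notable: the constraint \(\widehat{\bar T}(\lambda,\widehat{\bar P})\preceq0\) contains the bilinear term \(\lambda\bar P\), so the feasible set is not an LMI in the joint variables and its convexity (which the paper's local-implies-global step relies on) is not immediate; your endpoint-monotonicity argument establishes the global maximum without ever appealing to convexity, and as a by-product characterizes feasibility exactly as \(\bar\Xi(\bar r,\bar b)\le0\).
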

\begin{proof} Since we are solving a convex optimization problem, it is sufficient to show that \((\lambda^\star, \widehat{\bar P}^\star)\) provides a \emph{local} maximum.

We observed in step 1 above that \modk{\(\widehat{\bar T}\preceq 0\)} determines the values of \(\bar p_{11}\), \(\bar p_{12}\)
as in \eqref{eq:p11p12}. This leaves us with \(\lambda\) (or equivalently \(\bar r\)) and \(\bar p_{22}\) as decision variables. For simplicity we hereafter omit the subindices in \(\bar p_{22}\).

The constraint \(\widehat{\bar P}\succeq 0\), implies \({\rm det} (\widehat{\bar P})\geq 0\) or (after using the values of
\(\bar p_{11}\), \(\bar p_{12}\)) \(\bar p\geq (m/2) {\bar r}^2\).
The constraint \(\widehat{\bar T}\preceq 0\) implies \(\bar t_{11}\bar t_{22}-{\bar t_{12}}^2\geq 0\). We  use \eqref{eq:p11p12}, to write \(\bar t_{11}\bar t_{22}-\bar t_{12}^2\geq 0\) as a function \(\Delta(\bar r,\bar p)\); tedious algebra leads to the expression:
\[
\Delta(\bar r,\bar p) = -\frac{m^3}{2} {\bar r}^4+\frac{\bar b m^3}{2} {\bar r}^3+\left(\frac {m^2\bar p}{2} -\frac{3m^3+\bar b^2m^3}{4}\right) {\bar r}^2+\frac{bm^3}{2}{\bar r}-m{\bar p}^2.
\]

 We will be done if we prove that the pair \((\bar r^\star,\bar p^\star)\) is a local maximum for the problem
\[
{\rm max}\: \bar r \quad {\rm subject\: to}\quad  \bar p- m {\bar r}^2/2\geq 0, \:\: \Delta(\bar r,\bar p)\geq 0.
\]

At the point \(({\bar r}^\star,{\bar p}^\star)\) both constraints are active (in fact they were chosen to be so at steps 2 and 4).
 If we define the Lagrangian
\[\mathcal{L}(\bar r,\bar p) = \bar r +\zeta_1\:(\bar p- m {\bar r}^2/2)+\zeta_2\:\Delta(\bar r,\bar p),
\]
 where \(\zeta_1\), \(\zeta_2\) are the multipliers, the proof concludes by showing that
 the gradient of  \(\mathcal L\) at \(({\bar r}^\star, {\bar p}^\star)\) may be annihilated for a suitable choice of
 \emph{positive} multipliers.

 We impose the requirements
 \[
0= \left. \frac{\partial}{\partial  \bar r}\mathcal{L}\right|^\star =  1 -\zeta_1 m {\bar r}^\star
+\zeta_2
\left. \frac{\partial}{\partial \bar r}\Delta\right|^\star,
\]
(\(|^\star\) means evaluation at at \(({\bar r}^\star, {\bar p}^\star)\))
and
\[
0= \left. \frac{\partial}{\partial \bar p}\mathcal{L}\right|^\star = \zeta_1+\zeta_2 \left(\frac{m^2}{2}
{\bar r}^\star{}^2-2m{\bar p}^\star\right)= \zeta_1 -\zeta_2\frac{m^2}{2}{\bar r}^\star{}^2,
\]
(which implies that \(\zeta_1\) and \(\zeta_2\) have the same sign)
and eliminate \(\zeta_1\) to get
\[
1+\zeta_2 \left(\frac{m^3}{2} {\bar r}^\star{}^3+\left. \frac{\partial}{\partial \bar r}\Delta\right|^\star\right) = 0.
\]
In this way we are left with the task of proving that
\[
\frac{m^3}{2} {\bar r}^\star{}^3+\left. \frac{\partial}{\partial \bar r}\Delta\right|^\star<0,
\]
or, after using the expression for \(\Delta\) and some simplification,
\[
-2{\bar r}^\star{}^3+3\bar b {\bar r}^\star{}^2-(3+{\bar b}^2) {\bar r}^\star+\bar b < 0.
\]
Let us denote by \(\Lambda=\Lambda({\bar r}^\star,\bar b)\) the left hand-side of this inequality.
When \(\bar b = 2\) and \({\bar r}^\star=1\),
we have \(\Lambda = -1\).  On the other hand, we know that
\[
\bar \Xi = {\bar b}^2 \bar r-2 ( {\bar r}^\star{}^2 +1) \bar b+ {\bar r}^\star{}^3+3 {\bar r}^\star  = 0,
\]
and this relation makes it impossible for  \(\Lambda\) to change sign as \(\bar b>0\) and the corresponding \({\bar r}^\star(b)\in(0,1]\) vary.
In fact, if \(\Lambda\) were to vanish, we would have
\[
 \Lambda +\bar \Xi = \big ({\bar r}^\star{}^2 -1\big)\bar b-{\bar r}^\star{}^3=0,
\]
something that cannot happen because \({\bar r}^\star<1\) for \(\bar b \neq 2\).
\end{proof}

\section{Connecting the differential equations with optimization algorithms} \label{sec:connect}

The second-order differential equation \eqref{eq:Pol_ODE1} provides a limit for the algorithm \eqref{eq:nest1} when \(\beta\) changes smoothly with \(h=\sqrt{\alpha}\) in such a way that \(\beta_h = 1- \bar b \sqrt{m} h+o(h)\) as \(h\downarrow 0\).  In this section we study this limit when \(\bar b >0\). As in
\eqref{eq:betarhosq} write \(\beta_h=1-b_h\delta=1-b_h \sqrt{m} h\). Clearly, \(b_h\rightarrow \bar b\) and, in addition, for \(h\) sufficiently small \(b_h\in (b_{\rm min}^h, b_{\rm max}^h)\) (see \eqref{eq:limits}). The application of Theorem~\ref{th:1} then gives a rate \(\rho^2_h=1-r_h \delta = 1-r_h \sqrt{m} h\).
As noted before, the polynomial \(\bar \Xi\) in \eqref{eq:Xibar} is the limit of \(\Xi_\delta\) in \eqref{eq:Xi} as \(h\) (or \(\delta\)) approaches zero, and, accordingly,
\(r_h \rightarrow {\bar r}\),
where \({\bar r}\) solves \(\bar\Xi({\bar r},\bar b)=0\).  Then Theorem~\ref{th:1} guarantees that, over one step \(k\mapsto k+1\) of the algorithm, \(f(x_k)-f(x^\star)\)
decays by a factor \(\rho^2_h = 1-\sqrt{m}{\bar r} h+{ o}(h)\). Over \(k\) steps the decay factor  will be
\((1-\sqrt{m}{\bar r} h+{o}(h))^k\), a quantity that in the limit \(kh\rightarrow t\) converges to
\(\exp(-\sqrt{m}{\bar r} t) = \exp(-\lambda t)\). This is exactly the decay guaranteed by Theorem~\ref{th:2} for \(f(x(t))-f(x^\star)\) over an interval of length \(t\).

In addition, the matrices \(P_h\) in the discrete Lyapunov function converge to the matrix \(\widehat P\) in the differential equation, because from the expression for the entries in \eqref{eq:Pbis} and \eqref{eq:Pcontbis}
\[
p_{11}^h \rightarrow \bar p_{11}, \qquad
p_{12}^h \rightarrow \bar p_{12}, \qquad
p_{22}^h \rightarrow \bar p_{22}.
\]

The above discussion and standard results on the convergence of discretizations of ordinary differential equations imply the following result.

\begin{theorem} Fix the parameter \(\bar b >0\) and the initial conditions \(x(0)\), \(\dot x(0)\)
for  the differential equation \eqref{eq:Pol_ODE1}. For small \(h>0\), consider the optimization algorithm \eqref{eq:nest1} with parameters  \(\alpha= h^2\) and \modk{\(\beta=\beta_h= 1-\bar b \sqrt{m} h+o(h)\)}. Assume that the initial points \(x_{-1}\), \(x_0\)  are such that, as \(h\downarrow 0\),
\(x_0\rightarrow x(0)\) and \((1/h)(x_0-x_{-1}) \rightarrow \dot x(0)\). Then, in the limit \(kh\rightarrow t\),
\begin{enumerate}
\item \(x_k\rightarrow x(t)\) and \((1/h)(x_{k+1}-x_k) \rightarrow \dot x(t)\).
\item The discrete Lyapunov function in \eqref{eq:theo} converges to the Lyapunov function in \eqref{eq:theobis}.
\end{enumerate}
\end{theorem}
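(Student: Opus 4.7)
The plan is to split the statement into two essentially independent pieces: part 1 follows from standard convergence theory for consistent explicit one-step discretizations of the first-order ODE system \eqref{eq:ode1}, and part 2 reduces to termwise convergence of $\rho_h^{-2k}$, the matrix $P_h$, and $f(x_k)$, given part 1.

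For part 1, I would work in the state $\xi_k=[d_k^{\top},x_k^{\top}]^{\top}$ given by \eqref{eq:nest3}. With $\alpha=h^2$ and $\beta=\beta_h$ the recursion is an explicit one-step map $\xi_{k+1}=\Phi_h(\xi_k)$, and subtracting $\xi_k$ and dividing by $h$ yields
\[
\frac{d_{k+1}-d_k}{h}=\frac{\beta_h-1}{h}\,d_k-\frac{1}{\sqrt{m}}\nabla f(y_k),\qquad
\frac{x_{k+1}-x_k}{h}=\sqrt{m}\,\beta_h\,d_k-h\,\nabla f(y_k),
\]
with $y_k=x_k+O(h)$. Since $(\beta_h-1)/h\to -\bar b\sqrt{m}$ and $\nabla f$ is globally Lipschitz by $L$-smoothness, the right-hand sides converge uniformly on bounded sets in $\xi$ to the vector field of \eqref{eq:ode1}; thus $\Phi_h$ is a consistent discretization. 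The standard Gronwall-type convergence theorem for one-step methods then applies: with the assumed initial data converging to $[(1/\sqrt m)\dot x(0)^{\top},x(0)^{\top}]^{\top}$, one obtains $\xi_k\to [v(t)^{\top},x(t)^{\top}]^{\top}$ uniformly on compact intervals as $kh\to t$. Both conclusions of (1) follow, the second by reading $(x_{k+1}-x_k)/h=\sqrt{m}\beta_h d_k+O(h)\to\sqrt{m}v(t)=\dot x(t)$.

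For part 2, set $w_k=[d_k^{\top},(x_k-x^\star)^{\top}]^{\top}$ and let $P_h$ denote the matrix from \eqref{eq:Pbis} with $\delta=\sqrt m h$ and $r=r_h$ determined by $\Xi_{\delta}(r_h,b_h)=0$, where $b_h=(1-\beta_h)/\delta\to\bar b$. The explicit quadratic resolvent \eqref{eq:roots}, together with the sign analysis around Figure~\ref{figure}, shows that $r_h$ depends continuously on $(\delta,b)$ along the relevant positive branch, so $r_h\to\bar r$ and, via \eqref{eq:Pbis} and \eqref{eq:Pcontbis}, $P_h\to\bar P$ entrywise. Combined with part 1 this yields convergence of $w_k^{\top}P_h w_k$ and of $f(x_k)-f(x^\star)$. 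Finally, from $\rho_h^2=1-r_h\sqrt m h$,
\[
\log\rho_h^{-2k}=-k\log(1-r_h\sqrt m h)=k\,r_h\sqrt m\,h+O(k h^2)\longrightarrow \sqrt m\,\bar r\, t=\lambda t,
\]
so $\rho_h^{-2k}\to e^{\lambda t}$, and assembling the three limits yields the convergence of the discrete Lyapunov function \eqref{eq:theo} to the continuous one \eqref{eq:theobis}. The only genuinely analytic input is the one-step ODE convergence used in part 1, which is textbook here because $\Phi_h$ is explicit and the ODE vector field globally Lipschitz; the only subtlety I would verify carefully is the continuous selection of the positive branch $r_h$ in the joint limit $(\delta,b_h)\to(0,\bar b)$, which follows either from \eqref{eq:roots} or from a short implicit function theorem argument.
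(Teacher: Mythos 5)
Your proposal is correct and follows essentially the same route as the paper: part 1 via standard convergence theory for the consistent one-step discretization of \eqref{eq:ode1}, and part 2 via termwise limits $r_h\to\bar r$ (from $\Xi_\delta\to\bar\Xi$ along the relevant branch), $P_h\to\bar P$, and $\rho_h^{-2k}\to e^{\lambda t}$. The paper's own proof is exactly this preceding discussion plus an appeal to standard ODE-discretization convergence results, so no further comparison is needed.
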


\begin{remark}\label{rem:modified}
As a consequende of this theorem, the Lyapunov function of the differential equation
could have been derived alternatively by first finding the Lyapunov
function for the discrete optimization algorithm and then taking
limits. In our research we first investigated the discrete case and then studied the differential equations;
 in hindsight we saw it would have been easier to first deal with the differential equation and then carry out the analysis of the algorithm by mimicking the treatment of the continuous case. {\color{black} References \cite{SDJ18,SDS19,LO20} find Lyapunov functions for different optimization algorithms by first constructing Lyapunov functions for suitable so-called high-resolution differential equations. In our context, this would mean perturbing \eqref{eq:ode1} with suitable \(h\)-dependent terms so as to obtain an (\(h\)-dependent) differential equation for which the algorithm has a high order of consistency. The idea behind those high-resolution equations is very old in the numerical analysis of ordinary and partial differential equations, where they are known as \emph{modified equations}, see e.g. \cite{GSS86} or \cite[Chapter 10]{sanz2018numerical} and, for the stochastic case, \cite{KZ11}.
 }
 \end{remark}

\section{Heavy Ball and other methods}\label{sec:heavy}

{\color{black} The paper \cite{SBC16} has given rise to a number of contributions that aim to understand the behaviour of optimization methods by seeing them as discretizations of differential equations. However}  it is well known that the long-time properties of a differential equation are not automatically inherited by their discretizations, regardless of the value of the step-size chosen.
A very simple example is provided by the application of Euler's rule to the harmonic oscillator: for all step-sizes the discrete trajectories grow while the continuous solutions stay bounded. A more relevant example in an optimization context may be seen in \cite{SSKZ20}. {\color{black} On the other hand properties of the discretizations may often be extrapolated to the continuous limit; a general discussion of these points in different settings may be seen in \cite{A19}.}

In the setting of the preceding section,
it is not true that discretizing a dissipative differential equation with a known  a Lyapunov function will always yield an optimization algorithm with a \lq\lq suitable\rq\rq\ Lyapunov function. We now illustrate this fact by means of the Heavy Ball algorithm
obtained by choosing \(\gamma = 0\) and \(\beta \neq 0\) in \eqref{eq:gen_eq}.

 We proceed as in Section~\ref{sec:nest_anal}, rewrite the algorithm in terms of \(d_k\) and \(x_k\) and then cast it in the general format \eqref{eq:control_disc}.
We will {\color{black} presently} prove that  a discrete Lyapunov \emph{with properties similar to the Lyapunov function for Nesterov's method in Theorem~\ref{th:1} does not exist}.
We argue by contradiction. With the notation as in Section~\ref{sec:nest_anal}, we consider
\begin{itemize}
 \item  \(p_{ij}=m\,\phi_{ij}(\beta,\delta)\), \((i,j)= (1,1), (1,2), (2,2)\), such that \(\widehat P\succeq 0\),
 \item \(r=\psi(\beta,\delta)>0\),
 \item \(c>0\),
\end{itemize}
 and suppose that the corresponding \(T(\lambda,P)\) is \(\preceq 0\) for each \(\delta<c/\sqrt{\kappa}\). As in Remark~\ref{rem:dim2} to ensure equivariance with respect to changes of scale, the number \(c\) and functions \(\phi_{ij}\) and \(\psi\) are assumed to be independent of the constants \(m\) and \(L\) associated with \(f\) and the values of the parameters \(\alpha\) and \(\beta\) in the Heavy Ball algorithm.

For future reference, the element \(t_{11}\) is found to have the expression:
 \[t_{11} = (\beta^2-\rho^2)p_{11}+2\delta \beta^2p_{12}+\delta^2\beta^2p_{22}+\delta^2(L-m)\beta^2/2.
 \]
 This has to be \(\leq 0\) for \(\delta<c/\sqrt{\kappa}\).

 Next, as in the preceding section, we assume that \(\beta\) changes smoothly with \(h\) in such a way that, for some \(\bar b>0\),    \(\beta=\beta_h=1-\bar b\delta+o(h)=1-\bar b \sqrt{m} h+o(h)\). Clearly the algorithm is then a consistent discretization of the differential equation \eqref{eq:Pol_ODE1}, and we assume that \(r_h\), \(p_{ij}^h\) converge to their differential equation counterparts \(\bar r\) and \({\bar p}_{ij}\).\footnote{This hypothesis is not necessarily in the argument that follows. It is enough to suppose that
 \(r_h\), \(p_{ij}^h\) have finite limits. }

 In this situation:
 \[
 0\geq \delta^{-1} t_{11}^h = \frac{\beta_h^2-\rho_h^2}{\delta} p_{11}^h+2 \beta_h^2 p_{12}^h+\delta\beta_h^2p_{22}^h+\frac{c}{2}\,\sqrt{\frac{m}{L}}(L-m)\beta_h^2,
 \]
 and, taking limits,
 \begin{equation}\label{eq:contradiction}
 0\geq -2\frac{\bar b-\lambda}{\sqrt{m}} {\bar p}_{11}+2{\bar p}_{12}+\frac{c}{2}\,\sqrt{\frac{m}{L}}(L-m).
 \end{equation}
 This cannot happen because \(L\) may be arbitrarily large.

 {\color{black}\begin{remark}The Heavy Ball algorithm is a \lq\lq more natural\rq\rq\ discretization of \eqref{eq:Pol_ODE1} than Nesterov's, in that, as conventional linear multistep methods, it does not evaluate \(\nabla f\) at a linear combination of \(x_k\), \(x_{k-1}\) (cf.\ Remark~\ref{re:oneleg}).
 \end{remark}
 }%end of blue color

 {\color{black}\begin{remark} The contradiction in \eqref{eq:contradiction} arises because we insisted in \(T\) being \(\preceq 0\) for \lq\lq large\rq\rq\ non-dimensional stepsizes
 \(\delta= \sqrt{m}h<c/\sqrt{\kappa}\). For optimization algorithms that, in the limit \(h\downarrow 0\), approximate a differential equation with decay \(\exp(-\lambda h)=\exp(-\bar{r}\delta) \) in a time-interval of length \(h\), such large stepsizes seem to be necessary to achieve accelerated rates \(1-\mathcal{O}(\sqrt{\kappa})\) rather than rates \(1-\mathcal{O}(\kappa)\).

 The reference \cite{SDJ18}
 constructs a Lyapunov function for the Heavy Ball method, but it only operates for \(\delta =\mathcal{O}(1/\kappa)\)
 and, while useful in showing convergence, does not provide acceleration. For an additional  convergence proof of the Heavy Ball algorithm see \cite{GFJ15}; again this reference does not prove acceleration.
 \end{remark}
 }

 The three-parameter family of methods \eqref{eq:gen_eq}  contains algorithms, like Nesterov's, that \lq\lq inherit\rq\rq\ the ODE Lyapunov function {\color{black} for  stepsizes \(\delta<c/\sqrt{\kappa}\)} and algorithms, like the Heavy Ball, that do not. In fact the situation for the Heavy Ball is arguably the rule rather than the exception. For  \eqref{eq:gen_eq},
 \[t_{11} = (\beta^2-\rho^2)p_{11}+2\delta \beta^2p_{12}+\delta^2\beta^2p_{22}+\delta^2(L-m)(\beta-\gamma)^2/2-m\gamma^2\delta^2/2;
 \]
 where we observe the unwelcome presence of the factor \(L-m\) that created the difficulties  in the {\color{black}  analysis of the} Heavy Ball algorithm. If we look at a situation where \(\beta\) changes with \(h\) as above and in addition \(\gamma\) is also allowed to change with \(h\) and approaches a limit, a Lyapunov function {\color{black} that has the form envisaged and  works for \(\delta<c/\sqrt{\kappa}\)} may only exist if \(\beta_h-\gamma_h\) vanishes (at least in the limit \(h\downarrow 0\)) to offset the factor, i.e. if the algorithm is not far away from Nesterov's.
 \bigskip 

{\color{black} {\bf Acknowledgement.} We are thankful to an anonymous referee for helping us to improve the discussion of our results.}

\section*{Appendix}
In Theorem~\ref{th:optimal} we proved that, for each \(\bar b>0\), the rate of decay \(\lambda\) provided by Theorem~\ref{th:2} is the best one may obtain by using Theorem~\ref{theo:main_con} \emph{if one chooses} \(\sigma = 0\).
In this Appendix we investigate whether \(\lambda\) may be improved by a suitable choice of \(\sigma >0\). Since for \(\sigma\neq 0\), the matrix \(\bar M^{(3)}\) that contains the constant \(L\) contributes to \(T\), the following results require that \(f\), in addition to being \(m\)-strongly convex (as in Theorem~\ref{th:2}) is \(L\)-smooth, i.e.\ they hold for \(f\in  \mathcal{F}_{m,L}\).

When \(\sigma \neq 0\) the expressions for the \(t_{ij}\) in Section~\ref{sec:ode} have to be replaced by:
 \begin{align*}
  \bar t_{11} &=  -2\bar b \bar p_{11}+2\sqrt{m} \bar p_{12}+\lambda \bar p_{11},\\
  \bar t_{12} &= -\bar b \sqrt{m} \bar p_{12}+\sqrt{m} \bar p_{22}+\lambda \bar p_{12}, \\
  \bar t_{13} &= -(1/\sqrt{m})\bar p_{11}+\sqrt{m}/2, \\
  \bar t_{22} &= \lambda  \bar p_{22}-(m/2)\lambda-\sigma m L/(m+L),\\
  \bar t_{23} &= -(1/\sqrt{m})\bar p_{12}+\lambda/2+\sigma/2,\\
  \bar t_{33}&=  -\sigma/(m+L).
\end{align*}
As  in Section~\ref{sec:ode}, we set \(\lambda = \sqrt{m}\:{\bar r}\) and, in addition, \(\sigma = m \bar s\) (the variable \(\bar s\) is, as \(\bar r\), non-dimensional). We shall show that it is possible, for given \(m\) and \(L\), to find values of the six  parameters \({\bar p}_{11}\), \({\bar p}_{12}\), \({\bar p}_{22}\), \(\bar b\), \(\bar s\),  \(\bar r\), in such a way that the constraints
\(\widehat{\bar T}\preceq 0\),  \(\widehat{\bar P}\succeq 0\), \(\bar s\geq 0\) are satisfied and, at the same time, \(\bar r>1\), so that by using the matrix \({\bar M}^{(3)}\) it is possible to improve on the best value \(\bar r=1\) (associated with \(\bar b =2\) and leading to \(\lambda = \sqrt{m}\)) that may be achieved in Theorem~\ref{th:2}.

For given \(m\) and \(L\), we determine the values of the six parameters as follows:

\emph{First step.} We impose \({\bar t}_{22}=0\), a requirement that leads to the relation
\[
\frac{{\bar p}_{22}}{m} = \frac{1}{2}+\frac{\bar s}{\bar r} \frac{\kappa}{\kappa+1}.
\]

\emph{Second step.} We impose \({\bar t}_{23}=0\) and get
\[
\frac{{\bar p}_{12}}{m} = \frac{\bar r+\bar s}{2}.
\]

\emph{Third step.} We require \({\rm det}(\widehat{\bar P}) = 0\). Therefore
\[
\frac{{\bar p}_{11}}{m} = \frac{({\bar p}_{12}/m)^2}{{\bar p}_{22}/m}.
\]
Note that for \(\bar r, \bar s\geq 0\) we have \({\bar p}_{22}>0\) and thus the third step guarantees that
\(\widehat{\bar P}\succeq 0\).

\emph{Fourth step.} We next demand that \({\bar t}_{12}=0\) and obtain
\[
\bar b = \bar r+\frac{{\bar p}_{22}/m}{{\bar p}_{12}/m}.
\]
The four preceding displayed formulas allow us to express the parameters \({\bar p}_{12}\), \({\bar p}_{22}\), and \(\bar b\) as known functions of \(\bar s\) and  \(\bar r\).

\emph{Fifth step.} At this stage, we have ensublue that \({\bar t}_{12}\), \({\bar t}_{22}\), \({\bar t}_{23}\) vanish. As a result, the condition \(\widehat{\bar T}\preceq 0\) is equivalent to  \(\widehat{\bar T}^{13}\preceq 0\)
where \(\widehat{\bar T}^{13}\) is the \(2\times 2\) matrix obtained by suppressing from \(\widehat{\bar T}\) its second row and column. Furthermore \({\bar t}_{33}<0\) for \(\bar s >0\) and then we shall have \(\widehat{\bar T}^{13}\preceq 0\) if we impose that \({\rm det}(\widehat{\bar T}^{13})=0\), or
\[
{\bar t}_{11}{\bar t}_{33}- {\bar t}_{13}^2 = 0.
\]
By using the displayed formulas above, the last equation becomes a relation
\(F(\bar r,\bar s) = 0\), between \(\bar r\) and \(\bar s\), with
\[
F = \frac{{\bar r}^2{\bar s} (\bar r+\bar s)^2}{2(\kappa+1) \bar r+4\kappa \bar s}
-\frac{1}{4}\left( \frac{(\kappa+1)\bar r(\bar r+\bar s)^2 }{(\kappa+1) \bar r+2\kappa \bar s}-1\right)^2.
\]
We next show that the rational curve  \(F(\bar r,\bar s) = 0\) in the \((\bar r,\bar s)\) real plane has points with \(\bar s >0\) and \(\bar r >1\).

It is easily checked that the point \(\bar r = 1\), \(\bar s = 0\) lies on the curve \(F=0\) and has \(\bar b=0\).
This could have been anticipated because, if \(\bar s =0\) and \(\bar b = 2\), the construction in this appendix just reproduces the construction in Section~\ref{sec:ode}, which  yields \(\bar r = 1\).

By removing the denominator in the rational function \(F\) so as to have a polynomial equation for the curve and looking at the  Newton diagram at \(\bar r = 1\), \(\bar s = 0\), one sees that in the neighbourhood of this point
the curve consists of a single branch that may be parameterized by \(\bar r\). A Taylor expansion reveals that
\[
\bar s = 2(\kappa+1) (\bar r-1)^2 + \mathcal{O}((\bar r-1)^3).
\]
In this way, choosing a sufficiently small value of the parameter \(\bar s>0\), there are two possible values of the rate \(\bar r\)
\[
\bar r\approx 1\pm \sqrt{\frac{\bar s}{2(\kappa +1)}},
\]
one of which is \( >1\).
In conclusion we have proved analytically that the introduction of \(\sigma\) and \(\bar M^{(3)}\) in \(T\) makes it possible to \emph{achieve rates \(\bar r >1\)} (or \(\lambda > \sqrt{m}\)).

\begin{table}
\label{table}

\begin{center}
\begin{tabular} {c|cc|cccc}$ \kappa$ & $\bar b-2$ & $\bar r-1$ & $\bar s$ &
$\frac{{\bar p}_{11}}{m} -\frac{1}{2}$ & $\frac{{\bar p}_{12}}{m} -\frac{1}{2}$& $\frac{{\bar p}_{22}}{m} -\frac{1}{2}$\\\hline
$10^1$ & 3.5(-1)  & 8.6(-2) & 4.1(-1) & 1.6(-1) & 2.5(-1) & 3.4(-1) \\
$10^2$ & 2.2(-1) & 1.8(-2) & 1.3(-1) & 2.7(-2) & 7.6(-2) & 1.3(-1) \\
$10^3$ & 1.0(-1) & 3.9(-3) & 5.5(-2) & 5.2(-3) & 2.9(-2) & 5.5(-2) \\
$10^4$ & 4.7(-2) & 8.2(-4) & 2.4(-2) & 1.1(-3) & 1.3(-2) & 2.4(-2) \\
$10^5$ & 2.1(-2) & 1.8(-4) & 1.1(-2) & 2.3(-4) & 5.5(-3) & 1.1(-2) \\
$10^6$ & 9.9(-3) & 3.8(-5) & 5.0(-3) & 5.0(-5) & 2.5(-3) & 5.0(-3) \\
$10^7$ & 4.6(-3) & 8.1(-6) & 2.3(-3) & 1.1(-5) & 1.2(-3) & 2.3(-3) \\
$10^8$ & 2.2(-3) & 1.7(-6) & 1.1(-3) & 2.3(-6) & 5.4(-4) & 1.1(-3) \\
$10^9$ & 9.9(-4) & 3.8(-7) & 5.0(-4) & 5.0(-7) & 2.5(-4) & 5.0(-4)
\end{tabular}
\end{center}

\caption{Value of the dissipation parameter \(\bar b\) in the differential equation that leads to the best rate of decay \(\bar r\) for different choices of the condition number \(\kappa\). The table also gives the values of the parameters to construct the matrices \(\widehat{\bar T}\preceq 0\),  \(\widehat{\bar P}\succeq 0\).}
\end{table}

We next determined the value of \(\bar s\) that leads to the largest possible \(\bar r\) on the curve \(F=0\). In view of the involved expression of \(F\), we proceeded numerically and found  this largest value by continuation along the curve, starting from \(\bar r = 1\), \(\bar s = 0\).
The results, for different values of \(\kappa\), are given in Table~\ref{table}. For the small condition number \(\kappa=10\), the table shows that it is possible to achieve a
decay \(\approx\exp(-1.086\sqrt{m}t)\) by fixing the dissipation coefficient at the value \(\bar b \approx 2.35\)
rather than at \(\bar b=2\) as in Polyak's \eqref{eq:Pol_ODE}---this is a marginal improvement on the best decay \(\exp(-\sqrt{m}t)\) that one may insure without using \(\bar M^{(3)}\). In addition the improvement quickly decreases as the condition number grows: for \(\kappa = 10^3\) the decay is \(\exp(-1.0039\sqrt{m}t)\). In fact, we observe in the table that, as \(\kappa\uparrow \infty\), \(\bar r\approx 1+ 0.38\kappa^{-2/3}\). Of course as \(\kappa\) increases, \(\bar r\) and \(\bar b\) approach the values \(1\) and \(2\) that correspond to the situation studied in Section~\ref{sec:ode}, where \(f\) is not assumed to possess Lipschitz gradients. A similar convergence obtains for the matrix
 \(\widehat{\bar P}\succeq 0\). Also note that \(\bar s \approx 0.50 \kappa^{-1/3}\): as the condition number increases
the parameter \(\sigma = \sqrt{m}\bar s\) that multiplies \(\bar M^{(3)}\) decreases, as it may have been expected.

The results in the appendix and the connection between discrete and continuous Lyapunov functions strongly suggest that there would have been no substantial gain in the rate \(\rho^2\) found  in Section~\ref{sec:nest_anal} if we had allowed \(\ell\neq 0\) there.
%%%%%%%%%%%%%%%%%%%%

%%%%%%%%%%%%%%%%%%%%%%%%%%%%%%%%%%%%%%%%%%%

\end{document}